\documentclass[amstex,12pt]{article}
\usepackage{graphicx}

\usepackage{amsmath}
\usepackage{amssymb}
\usepackage{amscd}
\usepackage{bm}
\usepackage{latexsym}
\usepackage{makeidx}
\usepackage{tocloft}
\usepackage{fancyhdr}
\usepackage[all]{xy}

\usepackage{ifthen}

\pagestyle{fancy}

\fancyhead{}
\fancyhead[LE]{\thepage}

\fancyhead[RE]{\leftmark}
\fancyhead[RO]{\thepage}
\fancyhead[LO]{\rightmark}
\fancyfoot{}

\newcommand{\lsection}[2][""]{%
    \ifthenelse{\equal{#1}{""}}{%
        \section{#2}
    }{%
        \renewcommand{\sectionmark}[1]{\markright{\thesection.\ \MakeUppercase{#1}}}
        \section{#2}
        \renewcommand{\sectionmark}[1]{\markright{\thesection.\ \MakeUppercase{##1}}}
    }
}

\newcommand{\lchapter}[2][""]{%
    \ifthenelse{\equal{#1}{""}}{%
        \chapter{#2}
    }{%
        \renewcommand{\chaptermark}[1]{\markboth{\MakeUppercase{\chaptername\ \thechapter.\ #1}}{}}
        \chapter{#2}
        \renewcommand{\chaptermark}[1]{\markboth{\MakeUppercase{\chaptername\ \thechapter.\ ##1}}{}}
    }
}

\def\a{\alpha}
\def\b{\beta}
\def\l{\lambda}
\def\iso{isomorphism}
\def\homo{homomorphism}
\def\ra{\rightarrow}
\def\vb{vector bundle}
\def\fcn{function}
\def\mfd{manifold}

\def\nbd{neighborhood}
\def\hra{\hookrightarrow}
\def\lra{\longrightarrow}

\def\U{{\cal U}}

\def\W{{\cal W}}
\def\D{{\cal D}}
\def\ssm{\hspace{-.5mm}\smallsetminus\hspace{-.5mm}}

\def\na{\nabla}
\def\o{\omega}

\def\t{\theta}

\def\vL{\varLambda}
\def\vP{\varPsi}
\def\vG{\varGamma}
\def\vv {\vskip.2cm}
\def\st{such that}
\def\op{\operatorname}

\def\C{{\mathbb C}}
\def\R{{\mathbb R}}
\def\Z{{\mathbb Z}}

\def\bo1{{\text{\bold 1}}}

\def\r{respectively}

\def\Randmarke#1{\vadjust{\vbox to 0pt
                {\vss \hbox to \hsize{\hskip\hsize
                                     \quad #1\hss}\vskip3.5pt}}}
\def\Randpfeilmarke#1{\Randmarke{$\scriptscriptstyle\Leftarrow$#1}}
\def\bi#1 { $^{\bf!}$ {\{\sl #1 \}} \Randpfeilmarke{\bf !!}}

\setlength{\oddsidemargin}{.1cm}        \setlength{\evensidemargin}{.1cm}
\setlength{\marginparsep}{1mm}          \setlength{\marginparwidth}{.5cm}
\setlength{\textwidth}{16cm}            \setlength{\topmargin}{-1.3cm}
\setlength{\textheight}{23.7cm}           \setlength{\headheight}{.16in}

\setlength{\headsep}{.3in}              

\newtheorem{theorem}{Theorem}[section]

\newtheorem{corollary}[theorem]{Corollary}
\newtheorem{proposition}[theorem]{Proposition}
\newtheorem{exa}[theorem]{Example}
\newenvironment{example}{\begin{exa} \em}{\end{exa}}
\newtheorem{exas}[theorem]{Examples}

\newtheorem{prope}[theorem]{Property}

\newtheorem{defini}[theorem]{Definition}
\newenvironment{definition}{\begin{defini} \em}{\end{defini}}

\newtheorem{rema}[theorem]{Remark}
\newenvironment{remark}{\begin{rema} \em}{\end{rema}}


\newenvironment{equationth}{\stepcounter{theorem}\begin{equation}}{\end{equation}}


\newenvironment{proof}{{\noindent \sc Proof: } }{\mbox{ }\hfill$\Box$
                        \vspace{1.5ex} \par}

\def\hfleche#1#2{\smash{\mathop{\vbox{\hbox to 8.5mm{{#1}}}}\limits^{#2}}}


\title {Localized intersection of currents and\\
 the Lefschetz coincidence point theorem}
\author{Cinzia Bisi\thanks{Partially supported by PRIN2010-2011 Protocollo: 2010NNBZ78-012, by Firb2012 
 Codice: RBFR12W1AQ-001 and by GNSAGA-INDAM.}
, Filippo Bracci\thanks{Partially supported by the ERC grant ``HEVO - Holomorphic Evolution Equations'' n. 277691.}
, Takeshi Izawa and Tatsuo Suwa\thanks{Partially supported by the JSPS grant no. 24540060.}}

\date{}

\begin{document}


\pagestyle{plain}


\maketitle






\paragraph{Abstract\,:} We introduce the notion of a Thom class of a current and define the localized intersection of currents.
In particular we consider the situation where we have a $C^\infty$ map of \mfd s and study  localized
intersections of the source \mfd\ and currents on the target \mfd. We then obtain a residue theorem
on the source \mfd\ and give  explicit formulas for the residues in some cases.
These are applied to the problem of coincidence points of two maps.
We define the global and local coincidence homology classes and indices. 
A representation of the Thom class of the graph as a \v Cech-de~Rham cocycle immediately gives us an explicit expression of the  index at an isolated coincidence point, which in turn gives explicit coincidence classes in some non-isolated
components. Combining these, we have a general coincidence point theorem including the one by S. Lefschetz.
\vv

\noindent
{\it Keywords}\,: Alexander duality, Thom class, localized intersections, residue theorem, coincidence 
classes and indices, Lefschetz coincidence point formula.

\vv

\noindent
{\it Mathematics Subject Classification} (2010)\,: 14C17, 32C30, 37C25, 55M05, 55M20,
57R20, 58A25.


\section*{Introduction}

For two cycles in a \mfd, the localized intersection product of their classes   
is defined,   in the homology of their set theoretical intersection, via Alexander dualities and the cup product in the relative
cohomology.
Thus for a cycle $C$ in a \mfd\ $W$, the corresponding class   in the relative cohomology carries the local
information on $C$. For a sub\mfd\ $M$ of $W$, this is the Thom class of $M$, which may be identified with
the Thom class of the normal bundle of $M$ in $W$ by the tubular \nbd\ theorem.
In this paper we take up the localization problem of currents. We introduce the notion of a Thom class of a current
and define the localized intersection of two currents. In particular we consider the intersections of a fixed sub\mfd\ $M$
and currents on $W$, obtain a residue theorem on $M$ and give explicit expressions of the residues in some cases
(see Theorems \ref{res-m} and \ref{resnoniso} below for precise statements).
As an application we study the coincidence point problem for two maps.

The coincidence point formula discovered by S. Lefschetz (cf. \cite{Lef1}, \cite{Lef2}) is   formulated for a pair of continuous maps
between compact oriented  topological manifolds of the same dimension.
Using the above,  we define global and local homology classes of  
coincidence for a pair of $C^\infty$ maps $M\ra N$ and give a general coincidence point theorem, 
even in the case  the dimensions $m$ and $n$ of $M$ and $N$ are different ($m\ge n$) and
the coincidence points are non-isolated (Definitions \ref{gcoin} and \ref{lcoin}). In the case  $m=n$,
the use of Thom class in the \v Cech-de~Rham cohomology immediately gives us an explicit 
expression of the 
coincidence index at an isolated  coincidence point (Propositions \ref{propisolated}). This gives in turn an explicit coincidence homology class
at a certain non-isolated coincidence component  in the case $m>n$ (Proposition \ref{coinnonisol}).
We then have a general 
coincidence point formula, including the Lefschetz coincidence point formula.

The paper is organized as follows. In Section \ref{CdRTh}, we recall preliminary materials such as  Poincar\'e and Alexander dualities, global and localized intersection products,  Thom classes
in various settings and an explicit expression, in the \v Cech-de Rham cohomology, of the Thom class of an oriented \vb\ 
(Proposition \ref{Thomloc}). In Section \ref{secloc}, we consider the localization problem of currents, introduce the notion of 
a Thom class of a current and give some examples. We then consider localized intersections of currents.
In particular we study  intersections of a fixed sub\mfd\ $M$ in a \mfd\ $W$ and currents on $W$.
In fact we consider a more general situation where we have a map $F:M\ra W$ (Definition \ref{prodmap}).
 For a closed current $T$ on $W$, we have the intersection product $M\cdot_F T$ in the homology of $M$ and, if $T$ is localized at a compact set $\tilde S$ in $W$
and if $\vP_T$ is a Thom class of $T$ along $\tilde S$, we have the residue of $F^*\vP_T$  in the homology of $S=F^{-1}\tilde S$ as a localized intersection product. If $S$ has several connected components, we have a ``residue
theorem" (Theorem \ref{res-m}). We give an explicit formula for the residue at a non-isolated component of $S$ in the case it
is a sub\mfd\ of $M$ (Theorem \ref{resnoniso}). These are  conveniently used in Section \ref{seccoin}, where we study
the coincidence point problem for two maps. 

Let $M$ and $N$ be  \mfd s of dimensions $m$ and $n$ and let
$f, g:M\ra N$ be two maps.  We define the global coincidence class of the pair $(f,g)$ in the $(m-n)$-th homology of $M$
(Definition \ref{gcoin}). If $M$ is compact, we also define the local coincidence class of the pair  in the $(m-n)$-th homology of the  set of points in $M$ where $f$ and $g$ coincide (Definition \ref{lcoin}). We then
apply Theorem \ref{res-m} to get a general coincidence point theorem (Theorem \ref{gcth}).
In the case $m=n$ we have a formula for the coincidence index at an isolated coincidence point as the 
local mapping degree of $g-f$ (Proposition \ref{propisolated}).
This is in fact a classical result, however we give a short direct proof using the aforementioned expression of the Thom
class in the \v Cech-de Rham cohomology. This together with Theorem \ref{resnoniso} gives an explicit expression of the 
coincidence homology class at a non-isolated component (Proposition \ref{coinnonisol}). If $M$ and $N$ are compact
\mfd s of the same dimension, we have a general coincidence point formula (Theorem \ref{non-isolatecoin}), which
reduces to the Lefschetz coincidence point formula in the case the coincidence points are isolated (Corollary \ref{lefcpf}).

\section{Notation and conventions}\label{secnote}

For a topological space $X$, we  denote by $H_*(X,\C)$ and $H^*(X,\C)$ its homology and cohomology
 of finite singular chains with $\C$ coefficients. Also we denote by $\breve H_*(X,\C)$  the homology of locally finite 
 singular chains (Borel-Moore homology). For a $C^\infty$ \mfd, they can be computed using $C^\infty$
 simplicial chains. To be a little more precise, any $C^\infty$ \mfd\ $M$ admits a $C^\infty$ triangulation, which is essentially unique,  
and the groups $H_*(M,\C)$ and $H^*(M,\C)$ are naturally isomorphic with the ones defined by
  finite $C^\infty$ simplicial chains. Also the group $\breve H_*(M,\C)$  is naturally isomorphic with the homology of locally finite $C^\infty$ simplicial chains.

 In the sequel  a locally finite $C^\infty$ simplicial chain is simply called a chain,
unless otherwise  stated. Thus a chain $C$ is expressed as a locally finite sum $C=\sum a_is_i$ with  $a_i$  in $\C$ and $s_i$
 oriented $C^\infty$ simplices. We set $|C|=\bigcup s_i$ and call it the support of $C$. It is a closed set.
 For a cycle $C$, its class in the homology of the ambient space  is denoted by $[C]$, while the class in the homology
 of its support is simply denoted by $C$. 

For an open set $U$ in $M$, we
denote by $A^p(U)$ and $A^p_c(U)$, \r, the spaces of complex valued $C^{\infty}$ $p$-forms and $p$-forms with
compact support on $U$. The 
cohomology of the complex $(A^*(M),d)$ is the de~Rham cohomology
$H^*_{\rm dR}(M)$ and that of $(A^*_c(M),d)$ is the cohomology $H^*_c(M)$ with compact support.
A $C^\infty$ form will be  simply called a  form unless otherwise  stated.

\section{\v Cech-de~Rham cohomology and the Thom class}\label{CdRTh}

For the background on the \v Cech-de~Rham cohomology, we refer to \cite{BT}. The integration theory
on this cohomology  is developed in  \cite{Leh2}.
See  \cite{Su6} also for these materials and for the description of the Thom class in the framework of  relative \v Cech-de~Rham cohomology. The relation with the combinatorial viewpoint, as given in \cite{Br1},  is discussed in \cite{Su8}.

In this section we let $M$ denote a $C^\infty$ \mfd\ of dimension $m$. 

\subsection{Poincar\'e duality}\label{ssecPoin} 

We recall the Poincar\'e duality  and global intersection products of homology classes.

Suppose that $M$ is connected and oriented. Then  the pairing
\[
A^p(M)\times A^{m-p}_c(M)\lra\C\qquad\text{given by}\quad (\o,\varphi)\mapsto\int_M\o\wedge\varphi
\]
induces the Poincar\'e duality for a possibly non-compact \mfd\,:
\begin{equationth}\label{PoinDual}
P:H^p(M,\C)\simeq H^p_{\rm dR}(M)\stackrel{\sim}{\lra} H^{m-p}_c(M)^*\simeq\breve H_{m-p}(M,\C).
\end{equationth}
In the sequel we sometimes omit the coefficient $\C$ in homology and cohomology. In fact the Poincar\'e duality holds with $\Z$ coefficient. Note that $P$ is given by the left cap product with the fundamental class 
of $M$, the class of the sum of all $m$-simplices in $M$.
We also denote $P$ by $P_M$ if we wish  to make the \mfd\ $M$ under consideration explicit.

In the  \iso\ \eqref{PoinDual}, the class $[\o]$ of a closed $p$-form $\o$ corresponds 
to the functional
on $H^{m-p}_c(M)$ given by
\begin{equationth}\label{corrpoinform}
[\varphi]\mapsto\int_M\o\wedge\varphi,
\end{equationth}
or 
to the class $[C]$ of an $(m-p)$-cycle $C$ \st
\begin{equationth}\label{corrpoin}
\int_M\o\wedge\varphi=\int_C\varphi
\end{equationth}
for any closed form $\varphi$ in $A^{m-p}_c(M)$. We call $\o$ a {\em  de~Rham representative} of $C$.

For 
two classes $[C_1]\in \breve H_{q_1}(M)$ and $[C_2]\in \breve H_{q_2}(M)$, 
the {\em intersection product} $[C_1]\cdot [C_2]$ is 
defined by
\begin{equationth}\label{defint}
[C_1]\cdot [C_2]:=P(P^{-1}[C_1]\smallsmile P^{-1}[C_2])\qquad\text{in}\ \ \breve H_{q_1+q_2-m}(M),
\end{equationth}
where $\smallsmile$ denotes the cup product, which corresponds to the exterior product in the first \iso\ in \eqref{PoinDual}.

If $M$ is compact and connected, then $\breve H_0(M,\C)=H_0(M,\C)=\C$. Thus if $q_1+q_2=m$, $[C_1]\cdot [C_2]$ is a number given by
\[
[C_1]\cdot [C_2]=\int_M\o_1\wedge\o_2=\int_{C_1}\o_2=(-1)^{q_1q_2}\int_{C_2}\o_1,
\]
where $\o_1$ and $\o_2$ are de~Rham representatives of $C_1$ and $C_2$, \r.

\subsection{\v Cech-de~Rham cohomology}\label{CdR}

The \v Cech-de~Rham cohomology is defined for an arbitrary open covering of
$M$, however here we only consider  coverings  consisting of  two open sets.
Thus let $\U=\{U_0, U_1\}$ be an open covering of $M$. We set
$U_{01}=U_0\cap U_1$ and define the complex vector space $A^p(\U)$ as
\[
A^p(\U):=A^p(U_0)\oplus A^p(U_1)\oplus A^{p-1}(U_{01}).
\]
An element $\sigma$ in $A^p(\U)$ is given by a triple
$\sigma=(\sigma_0, \sigma_1, \sigma_{01})$ with $\sigma_i$ a $p$-form on $U_i$, $i=0,1$, and
$\sigma_{01}$  a $(p-1)$-form on $U_{01}$. We define an operator $D: A^p(\U) \to A^{p+1}(\U)$ by
\[
D\sigma:=(d\sigma_0, d\sigma_1, \sigma_1|_{U_{01}}-\sigma_0|_{U_{01}}-d\sigma_{01}).
\]
Then we see that  $D \circ D =0$ so that we have a complex
$(A^*(\U),D)$. The $p$-th {\em \v{C}ech-de~Rham cohomology} of $\U$, denoted by
$H_D^p(\U)$, is the $p$-th cohomology of this complex. It is also abbreviated as  \v CdR cohomology.
We denote the class of a cocycle $\sigma$ by $[\sigma]$.
It can be shown that the map $A^p(M) \to A^p(\U)$ given by
$\o \mapsto (\o|_{U_0}, \o|_{U_1}, 0)$
induces an isomorphism
\begin{equationth}\label{dRCdR}
\a:H^p_{\rm dR}(M)\stackrel\sim\lra  H^p_D(\U).
\end{equationth}
Note that $\a^{-1}$ assigns to the class of a \v CdR cocycle $(\sigma_0,\sigma_1,\sigma_{01})$ the class of the
closed form $\rho_0\sigma_0+\rho_1\sigma_1-d\rho_0\wedge\sigma_{01}$, where $\{\rho_0,\rho_1\}$ is a partition
of unity subordinate to $\U$.
\vv

Now we could define the cup product for \v CdR cochains and describe the Poincar\'e duality
in terms of the \v CdR cohomology as in \cite{Su6} in the case $M$ is compact.  However here we proceed as follows.
Let $M$  and $\U=\{U_0, U_1\}$ be as above. A  {\em system of honeycomb cells} adapted to $\U$ is a collection 
$\{R_0,R_1\}$ 
of two  submanifolds of $M$ of dimension $m$ with $C^\infty$ boundary 
having the following properties\,:
\smallskip

\noindent
(1) $R_i\subset U_i$ for $i=0,1$,
\smallskip

\noindent
(2) $\op{Int}\, R_0\cap \op{Int}\, R_1=\emptyset$ and
\smallskip

\noindent
(3)  $R_0 \cup R_1=M$,
\smallskip

\noindent
where $\op{Int}$ denotes the interior.
Suppose $M$ is oriented. Then $R_0$ and $R_1$ are naturally oriented. Let $R_{01}=R_0\cap R_1$ with  the orientation as the boundary of
$R_0$\,; $R_{01}=\partial R_0$, or equivalently,  the  orientation opposite to that of  the
boundary of $R_1$\,; $R_{01}=-\partial R_1$. We consider the pairing
\[
A^p(\U)\times A^{m-p}_c(M)\lra\C
\]
given by
\begin{equationth}\label{intCdR}
(\sigma,\varphi)\mapsto\int_{R_0}\sigma_0\wedge\varphi+\int_{R_1}\sigma_1\wedge\varphi+\int_{R_{01}}\sigma_{01}\wedge\varphi.
\end{equationth}
Then it induces the Poincar\'e duality \eqref{PoinDual} through the \iso\ $\a$ in \eqref{dRCdR}.

\subsection{Relative \v Cech-de~Rham cohomology and Alexander duality}\label{relCdR}

We introduce the relative \v Cech-de~Rham cohomology and describe the Alexander duality, which is used to define
localized intersection products.

Let  $S$ be a closed set in $M$. Letting
$U_0=M\ssm  S$ and $U_1$ a \nbd\ of $S$ in $M$, we
consider the covering $\U=\{U_0,U_1\}$ of $M$.
 If we set
\[
A^p(\U,U_0)=\{\,\sigma\in A^p(\U)\mid \sigma_0=0\,\},
\]
we see that $(A^*(\U,U_0),D)$ is a subcomplex of $(A^*(\U),D)$. We denote by
$H^p_D(\U,U_0)$ the $p$-th cohomology of this complex. From the short exact sequence
\[
0\lra A^*(\U,U_0)\stackrel{j^*}\lra A^*(\U)\stackrel{\iota^*}\lra A^*(U_0)\lra 0,
\]
where $j^*$ is the inclusion and $\iota^*$ is the \homo\ that assigns $\sigma_0$ to $\sigma=(\sigma_0,\sigma_1,\sigma_{01})$, we have the long exact sequence
\begin{equationth}\label{longexact}
\cdots\lra H^{p-1}_D(\U)\stackrel{\iota^*}\lra H^{p-1}_{\rm dR}(U_0)\stackrel{\delta^*}\lra H^p_D(\U,U_0)\stackrel{j^*}\lra H^p_D(\U)\stackrel{\iota^*}\lra H^p_{\rm dR}(U_0)\lra\cdots.
\end{equationth}
In the above, $\delta^*$ assigns the  class $[(0,0,-\t)]$ to the class of a closed $(p-1)$-form $\t$ on $U_0$.
Comparing with the long cohomology exact sequence for the pair $(M,M\ssm S)$, we have
 a natural \iso\ (see  \cite{Su8} for a precise proof)\,:
\[
H^p_D(\U,U_0)\simeq H^p(M,M\ssm S;\C).
\]

We  describe the Alexander duality in terms of the relative \v CdR cohomology  in the case $S$ is compact
and admits a regular \nbd\ (cf.  \cite{Su6}).  
Thus suppose  $M$ is oriented and let $\{R_0,R_1\}$ be a system of honeycomb cells adapted to $\U$.
We assume that $S$ is compact so that we may also assume that $R_1$ is compact. Consider the pairing
\begin{equationth}\label{relpair}
A^p(\U,U_0)\times A^{m-p}(U_1)\lra\C
\end{equationth}
given by
\[
(\sigma,\varphi)\mapsto\int_{R_1}\sigma_1\wedge\varphi+\int_{R_{01}}\sigma_{01}\wedge\varphi.
\]
Then it induces the Alexander \homo\,:
\begin{equationth}\label{alexhomo}
A:H^p(M,M\ssm S;\C)\simeq H^p_D(\U,U_0)\lra H^{m-p}_{\rm dR}(U_1)^*\simeq H_{m-p}(U_1,\C).
\end{equationth}
The \homo\ \eqref{alexhomo}
 depends on $U_1$ and is not an \iso\ in general. 
Here we consider the following hypothesis\,:
\begin{enumerate}
\item[(*)] there exists a triangulation of $M$ \st\ 
$S$ is (the polyhedron of) a subcomplex.
\end{enumerate}
In this case we also say that the triangulation is compatible with $S$.

\begin{remark} The triangulation above may simply be $C^0$ to have the Alexander duality 
\eqref{AlexDual} below. We mainly consider $C^\infty$ triangulations in order that, in an  expression as \eqref{corralex}, the integral in the right hand side  makes
sense. The hypothesis is certainly satisfied  if $S$ is the support of a chain.
It is also satisfied if $S$ is a $C^\infty$ 
sub\mfd\ of $M$.

Another  case of interest where the following arguments go through is that $S$ is a subanalytic set in a real analytic \mfd\ $M$, as in this case there is a subanalytic triangulation of $M$ compatible with $S$ (cf. \cite{Sh}) so that the integration makes sense, see Example \ref{exmult} below.
\end{remark}

Under the hypothesis (*), we may take as $U_1$ a regular \nbd\ of $S$ so that there is a
deformation retract  $U_1\ra S$.  We then have $H_{m-p}(U_1)\simeq H_{m-p}(S)$ and  
\eqref{relpair} induces the Alexander duality\,: 

\begin{equationth}\label{AlexDual}
A:H^p(M,M\ssm S;\C)\simeq H^p_D(\U,U_0)\stackrel{\sim}{\lra} H^{m-p}_{\rm dR}(U_1)^*\simeq H_{m-p}(S,\C).
\end{equationth}
Note that $A$ is also given by the left cap product with the fundamental class $M$. We also denote $A$ by $A_{M,S}$ if we wish  to make the pair $(M,S)$ explicit. 

In the  \iso\ \eqref{AlexDual}, the class $[\sigma]$ of a  $p$-cochain $\sigma$ corresponds to the functional
on $H^{m-p}_{\rm dR}(U_1)$ given by
\begin{equationth}\label{corralexform}
[\varphi_1]\mapsto\int_{R_1}\sigma_1\wedge\varphi_1+\int_{R_{01}}\sigma_{01}\wedge\varphi_1,
\end{equationth}
or to the class $[C]$ of an $(m-p)$-cycle $C$  in $S$ \st
\begin{equationth}\label{corralex}
\int_{R_1}\sigma_1\wedge\varphi_1+\int_{R_{01}}\sigma_{01}\wedge\varphi_1=\int_C\varphi_1
\end{equationth}
for any closed form $\varphi_1$ in $A^{m-p}(U_1)$. 

Denoting by $i:S\hra M$ the inclusion, we have the following commutative diagram\,:
\begin{equationth}\label{PoinAlex}
\SelectTips{cm}{}
\xymatrix{ H^p(M, M \ssm S)\ar[r]^-{\sim}_-A \ar[d]^{j^*}&H^{m-p}_{\rm dR}(U_1)^*\simeq  H_{m-p}(S)
\ar[d]^{i_*}\\
H^p(M) \ar[r]^-{\sim}_-P &H^{m-p}_c(M)^*\simeq \breve H_{m-p}(M).}
\end{equationth}

\begin{remark}\label{remimp}
1. 
In the above, the \homo\ $i_*:H_{m-p}(S)\ra \breve H_{m-p}(M)$ is the one naturally induced from $i$, while
$i_*:H^{m-p}_{\rm dR}(U_1)^*\ra H^{m-p}_c(M)^*$ 
is described as follows.
For any functional $F_1$ on $H^{m-p}_{\rm dR}(U_1)$, there is a corresponding cycle $C$ in $S$ and $i_*F_1$ is given by
\[
i_*F_1[\varphi]=\int_C\varphi\qquad\text{for}\ \ [\varphi]\in H^{m-p}_c(M).
\]
Alternatively, if $(0,\sigma_1,\sigma_{01})$ is a \v CdR representative of $A^{-1}F_1$, then 
\[
i_*F_1[\varphi]=\int_{R_1}\sigma_1\wedge\varphi+\int_{R_{01}}\sigma_{01}\wedge\varphi.
\]
\smallskip

\noindent
2. For a closed set $S$ (which may not be compact) in $M$ satisfying (*), we may define the Alexander \iso
\[
A:H^p(M,M\ssm S)\stackrel{\sim}{\lra} \breve H_{m-p}(S)
\]
via combinatorial topology (cf. \cite{Br1}). 
\end{remark}

Let $S_1$ and $S_2$ be compact sets in $M$ satisfying (*) and set $S=S_1\cap S_2$. Let $A_1$, $A_2$ and $A$ denote the Alexander \iso s for $(M,S_1)$, $(M,S_2)$ and $(M,S)$, \r.
For two classes $c_1\in  H_{q_1}(S_1)$ and $c_2\in  H_{q_2}(S_2)$, the {\em localized intersection
product} $(c_1\cdot c_2)_S$ is defined by
\begin{equationth}\label{locint}
(c_1\cdot c_2)_S:=A(A_1^{-1}c_1\smallsmile A_2^{-1}c_2)\qquad\text{in}\ \  H_{q_1+q_2-m}(S),
\end{equationth}
where $\smallsmile$ denotes the cup product
\[
H^{m-q_1}(M,M\ssm S_1)\times H^{m-q_2}(M,M\ssm S_2)\stackrel\smallsmile\lra H^{2m-q_1-q_2}(M,M\ssm S).
\]
Letting $i_1:S_1\hra M$, $i_2:S_2\hra M$ and $i:S\hra M$ be the inclusions, from 
\eqref{PoinAlex} we see that the definitions \eqref{defint} and \eqref{locint} are consistent in the sense that
\[
i_*(c_1\cdot c_2)_S=(i_1)_*c_1\cdot (i_2)_*c_2.
\]

\subsection{Thom class}\label{Thom}

We list \cite{MS} as a general reference for the Thom \iso\ and the Thom class of a  real \vb. In general
they are defined in cohomology with $\Z_2$ coefficients, while
for an oriented \vb, they can be defined in cohomology with $\Z$ coefficients.
They can also be described in terms of differential forms,
in which case the cohomology involved is with $\C$ coefficients. This is done
in \cite{BT} using cohomology with compact support in the vertical direction.
Here we use \v Cech-de~Rham cohomology instead as in \cite{Su6}. This way we can express relevant local informations
more explicitly.

In this subsection, we sometimes omit the coefficient $\C$ in homology and cohomology. 
In fact the \iso s we consider below can be defined from combinatorial viewpoint in homology and cohomology with  $\Z$ coefficient (cf. \cite{Br1}, also \cite{Su8}).

\paragraph{(a) Thom class of an oriented real \vb\,:} Let 
$\pi:E\to M$ be an oriented real \vb\ of rank $k$. We identify $M$ with the image of the zero section. Then we have the Thom \iso\
\[
T_E:H^p(M,\C)\overset{\sim}{\lra}H^{p+k}(E,E\ssm M;\C),
\]
 whose inverse is given by the integration   along the fiber   of $\pi$ (see \cite[Ch.II, 5]{Su6}). 
 
 The {\em Thom class 
 $\vP_E$} of $E$, which is in $H^{k}(E,E\ssm M)$, is the image of the constant function $1$ in $H^0(M,\C)$ by $T_E$.
 Note that $T_E$ is given by the cup product with $\vP_E$. Let $W_0= E\ssm M$ and $W_1$ a \nbd\ of $M$ in $E$ and consider the covering $\W=\{W_0,W_1\}$ of  $E$. We refer to \cite[Ch.II, Proposition 5.7]{Su6} for an explicit
 expression of a \v CdR cocycle representing $\vP_E$ 
in the \iso\ $H^k(E,E\ssm M)\simeq H^k_D(\W,W_0)$.
In particular suppose $E$ is trivial on an open set $U$ of $M$. Then, setting $A^k(\W,W_0)|_U=A^k(\W',W_0')$ with
$W_i'=W_i\cap\pi^{-1}(U)$, we have (cf. \cite[Ch.III, Lemma 1.4]{Su6})\,:

\begin{proposition}\label{Thomloc}
Suppose $E$ is trivial on an open set $U$ of $M$\,; $E|_U\simeq\R^k\times U$ and let $\rho:E|_U\ra\R^k$ denote the
projection on to the fiber direction. Then $\vP_{E|_U}$ is represented by a cocycle in $A^k(\W,W_0)|_U$ of the form
\[
(0,0,-\rho^*\psi_k),
\]
where $\psi_k$ is an angular form on $\R^k\ssm\{0\}$, i.e., a closed $(k-1)$-form with $\int_{S^{k-1}}\psi_k=1$.
\end{proposition}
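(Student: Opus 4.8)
The plan is to show directly that the indicated cocycle represents $T_{E|_U}(1)$, which is $\vP_{E|_U}$ by definition. Recall that the inverse of the Thom \iso\ $T_{E|_U}\colon H^0(U)\stackrel{\sim}{\lra} H^k_D(\W',W_0')\simeq H^k(E|_U,E|_U\ssm U)$ is integration along the fiber $\pi_*$; hence $T_{E|_U}\circ\pi_*=\mathrm{id}$, so a relative cocycle $\tau$ represents $\vP_{E|_U}$ \iff\ $\pi_*[\tau]=1$. Thus I take $\tau=(0,0,-\rho^*\psi_k)$ and check two things: that $\tau$ is a $D$-cocycle of the relative complex $A^*(\W',W_0')$, and that $\pi_*[\tau]=1$. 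The first is immediate: $\rho$ maps $W_0'=(E\ssm M)|_U$ into $\R^k\ssm\{0\}$, where $\psi_k$ is defined, so $\rho^*\psi_k$ is a genuine $(k-1)$-form on $W_0'$ and $\tau\in A^k(\W',W_0')$; and, $\psi_k$ being closed,
\[
D\tau=(0,\,0,\,-d(-\rho^*\psi_k))=(0,0,\rho^*d\psi_k)=(0,0,0).
\]

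It remains to compute $\pi_*[\tau]\in H^0(U)$, a locally constant \fcn\ on $U$. Since the trivialization $E|_U\simeq\R^k\times U$ is global, this \fcn\ may be computed on a single fiber $E_x\simeq\R^k$: there the covering restricts to $\{\R^k\ssm\{0\},\,B\}$ with $B=W_1'\cap E_x$ a \nbd\ of $0$, and $\tau$ restricts to $(0,0,-\psi_k)$. Choose a partition of unity $\{\rho_0,\rho_1\}$ subordinate to this covering with $\rho_1\equiv 1$ near $0$ and $\op{supp}\rho_1$ compact in $B$; then $\rho_0=1-\rho_1$ vanishes near $0$ and equals $1$ outside a compact set. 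By the partition of unity formula for the relative \v CdR isomorphism (the relative analogue of the formula recalled after \eqref{dRCdR}), the restricted class corresponds to the compactly supported closed $k$-form $-d\rho_0\wedge(-\psi_k)=d\rho_0\wedge\psi_k$ on $\R^k$, and its integral over $\R^k$ is the value of $\pi_*[\tau]$ on $E_x$. Since $\psi_k$ is closed off the origin, $d\rho_0\wedge\psi_k=d(\rho_0\psi_k)$ there, and Stokes' formula on an annulus $\{\e\le|x|\le R\}$ carrying $\op{supp}(d\rho_0)$ gives
\[
\int_{\R^k}d\rho_0\wedge\psi_k=\int_{\{\e\le|x|\le R\}}d(\rho_0\psi_k)=\int_{\{|x|=R\}}\psi_k-\int_{\{|x|=\e\}}\rho_0\,\psi_k=\int_{S^{k-1}}\psi_k-0=1,
\]
using $\rho_0\equiv 1$ on $\{|x|=R\}$, $\rho_0\equiv 0$ on $\{|x|=\e\}$, and that a closed $(k-1)$-form on $\R^k\ssm\{0\}$ has the same integral over every sphere about $0$. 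Hence $\pi_*[\tau]=1$ and $[\tau]=T_{E|_U}(1)=\vP_{E|_U}$.

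The substantive point is not this calculation but the conventions it rests on: one needs the normalization under which the inverse Thom \iso\ restricts, on a fiber, to ordinary integration over $\R^k$ oriented compatibly with the oriented bundle $E$ and with the trivialization, together with the orientation conventions of Section \ref{CdRTh} (the orientation of $R_{01}$ as $\partial R_0$, and the sign in $\delta^*[\eta]=[(0,0,-\eta)]$) — precisely those fixed in \cite[Ch.II]{Su6}. Granting these, the two displays above constitute the argument; the statement is also \cite[Ch.III, Lemma 1.4]{Su6}.
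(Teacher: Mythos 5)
Your argument is correct. Note that the paper itself offers no proof of Proposition \ref{Thomloc} at all: it is stated with a bare citation to \cite[Ch.III, Lemma 1.4]{Su6}, where the cocycle is obtained by specializing the general honeycomb-cell construction of the Thom class of an oriented bundle. Your route is a direct verification instead: you check that $(0,0,-\rho^*\psi_k)$ is a $D$-cocycle of the relative complex (trivial, since $\psi_k$ is closed and $\rho$ maps $W_{01}'$ into $\R^k\ssm\{0\}$), and then use the stated fact that the inverse Thom \iso\ is fiber integration to reduce everything to computing $\pi_*[\tau]$ on one fiber, which you do via the partition-of-unity representative $d\rho_0\wedge\psi_k$ and Stokes on an annulus. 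The sign and the normalization come out right, and they agree with the honeycomb-cell computation ($R_{01}\cap E_x=-\partial(R_1\cap E_x)$, so $\int_{R_{01}\cap E_x}(-\psi_k)=\int_{S^{k-1}}\psi_k=1$), which is a useful cross-check. What your approach buys is self-containedness: modulo the two conventions you explicitly flag (fiber integration as the inverse of $T_E$, and the compatibility of the relative partition-of-unity representative with fiber integration --- the relative analogue of the formula after \eqref{dRCdR}, which needs $\rho_1$ compactly supported in the fiber, as you arrange), the proof does not require unwinding Suwa's general construction. The one step you pass over lightly is precisely that compatibility between the honeycomb-cell fiber integration and the integral of the compactly supported de~Rham representative; it is standard and true, but it is the only place where a reader would have to consult \cite[Ch.II, 5]{Su6} anyway.
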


Suppose $M$ is compact and oriented. We orient the total space $E$ so that, if $\xi=(\xi_1,\dots,\xi_k)$ is a positive fiber coordinate system of $E$ and if $x=(x_1,\dots,x_m)$ is a positive coordinate system on $M$, then $(\xi,x)$ is a positive coordinate system on $E$.
We then have the commutative diagram\,:

\begin{equationth}\label{ThAP}
\SelectTips{cm}{}
\xymatrix{H^p(M) \ar[r]_-{T_E}^-{\sim} \ar[d]^{P}_{\wr}& H^{p+k}(E,E\ssm M)
\ar[ld]^{A}_{\rotatebox{30}{{\hspace{-3.5mm}}$\sim$}}  \\
H_{m-p}(M).&{}}
\end{equationth}

\paragraph{(b) Thom class of a sub\mfd\,:} 
Let $W$ be an oriented $C^\infty$ \mfd\ of dimension $m'$ and $M$ a compact and 
oriented sub\mfd\ of $W$ of dimension $m$. Set $k=m'-m$. 
In view of (\ref{ThAP}), we define the Thom \iso\ $T_M:H^p(M)\overset{\sim}\ra H^{p+k}(W,W\ssm M)$ 
so that the following  diagram becomes commutative (cf. \cite{Br1})\,:
\[
\SelectTips{cm}{}
\xymatrix{H^p(M) \ar[r]_-{T_M}^-{\sim} \ar[d]^{P}_{\wr}& H^{p+k}(W,W\ssm M)
\ar[ld]^{A}_{\rotatebox{30}{{\hspace{-3.5mm}}$\sim$}}  \\
H_{m-p}(M).&{}}
\]
Then  the Thom class $\vP_M$ of $M$
is defined by
\begin{equationth} \label{ThomClass}
\vP_M=:T_M(1)=A^{-1}(M)\qquad\text{in}\ \ H^{k}(W,W\ssm M).
\end{equationth}

Let $N_M\to M$ denote the normal bundle of $M$ in $W$. Suppose the orientation of $M$ is compatible with that of $W$
in the sense that $N_M$ is orientable.
We orient $N_M$ as follows. Namely, if $(x_1,\dots,x_k,\dots,x_{m'})$ is a positive coordinate system on $W$ such that $M$ is given by $x_1=\cdots=x_k=0$ and that $(x_{k+1},\dots,x_{m'})$ is a positive coordinate system of $M$, then the vectors $(\frac{\partial}{\partial x_1},\dots,\frac{\partial}{\partial x_k})$ determine a positive frame of $N_M$. By the tubular \nbd\ theorem, there is a \nbd\ $W_1$ of $M$ in $W$ and an orientation
preserving diffeomorphism of $W_1$ onto a \nbd\ of the zero section of $N_M$, which is identified with $M$.
By excision we have
\[
H^k(W,W\ssm M)\simeq
H^k(W_1,W_1\ssm M)\simeq H^k(N_M,N_M\ssm M)
\]
and, in the above \iso s, the Thom class $\vP_M$ of $M$ corresponds to the Thom class $\vP_{N_M}$ of the \vb\ $N_M$.

\begin{remark}\label{rempseudo} More generally, for a pseudo-\mfd\ $M$ in $W$, we may define the Poincar\'e \homo\
$P:H^p(M)\lra\breve H_{m-p}(M)$.
Thus we have the Thom \homo\ and the Thom class $\vP_M$ of $M$ (cf.  \cite{Br1}).
\end{remark}
\paragraph{(c) Thom class of a cycle\,:} Let $C$ be a finite $(m'-p)$-cycle in $W$ and $\tilde S$ its support.
We may define the Thom class $\vP_C$ of $C$ by
\[
\vP_C:=A^{-1}(C),
\]
where $A$ is the Alexander \iso\ \eqref{AlexDual} for the pair $(W,\tilde S)$.

\section{Localized intersection of currents}\label{secloc}

\subsection{Thom class of a current}\label{thomclassofcurrents}

Let $W$ be an oriented  $C^\infty$ manifold of dimension $m'$. Recall that a  {\em $p$-current} $T$ on $W$ is a continuous linear functional
on the space $A_c^{m'-p}(W)$. We use the notation
\[
T(\varphi)=\langle T,\varphi\rangle,\qquad\varphi\in A_c^{m'-p}(W).
\]
Let $\D^p(W)$ denote the space of $p$-currents on $W$. The differential $d:\D^p(W)\to \D^{p+1}(W)$ is defined by
\[
\langle dT,\varphi\rangle=(-1)^{p+1}\langle T,d\varphi\rangle,\qquad\varphi\in A_c^{m'-p-1}(W).
\]
Then  $(\D^*(W),d)$ forms a complex, whose $p$-th cohomology is denoted by $H^p(\D^*(W))$.
For a closed $p$-current $T$, we denote by $[T]$ its cohomology class. 

A form $\o$ in $A^p(W)$ may be naturally thought of as a $p$-current $T_\o$ by
\[
\langle T_\o,\varphi\rangle=\int_W\o\wedge\varphi,\qquad\varphi\in A_c^{m'-p}(W).
\]
If $\o$ is closed, then  $T_\o$ is closed and the assignment $\o\mapsto T_\o$ induces an \iso
\begin{equationth}\label{dRcurr}
\b:H^p_{\rm dR}(W)\stackrel\sim\lra H^p(\D^*(W)).
\end{equationth}

A \v Cech-de~Rham cochain $\sigma$ on a covering $\W$ of $W$ may be also thought of as a current $T_\sigma$ via integration given as \eqref{intCdR}. If $D\sigma=0$, then  $T_\sigma$ is closed and the assignment $\sigma\mapsto T_\sigma$ induces
the \iso\ $\b\circ\a^{-1}:H^p_D(\W)\stackrel\sim\ra H^p(\D^*(W))$.
 
Also an $(m'-p)$-chain $C$ may be thought of as a $p$-current
$T_C$ by 
\[
\langle T_C,\varphi\rangle=\int_C\varphi,\qquad\varphi\in A_c^{m'-p}(W).
\]
If $C$ is a cycle, $T_C$ is closed and the assignment $C\mapsto T_C$ induces an \iso
\[
\gamma:\breve H_{m-p}(W)\stackrel\sim\lra H^p(\D^*(W)).
\]
By \eqref{corrpoin}, we have $\gamma\circ P=\b$. Thus for any closed current $T$ on $W$, there exist
a closed $p$-form $\o$, a \v CdR cocycle $\sigma$ and an $(m'-p)$-cycle $C$ \st
\[
[T]=[T_\o]=[T_\sigma]=[T_C].
\]
We call $\o$, $\sigma$ and $C$, \r,  de~Rham, \v CdR and cycle representatives of $T$.

If $U$ is an open set of $W$, there is a natural inclusion $A^*_c(U)\hra A^*_c(W)$, given by extension by zero, so that we may consider the restriction
$T|_U$ to $U$ of a current $T$ on $W$. The {\em support} $\op{supp}(T)$ of $T$ is the smallest closed subset of $W$ \st\
$T|_{W\ssm \op{supp}(T)}=0$.
\vv

Now we consider the localization problem of currents. 
Thus let   $\tilde S$ be  a closed set in $W$. Let $W_0=W\ssm \tilde S$ and  $W_1$  a 
\nbd\ of $\tilde S$ in $W$ and consider the covering
$\W=\{W_0, W_1\}$ of $W$. 
We have the commutative diagram with exact row (cf. \eqref{longexact})\,:

\begin{equationth}\label{funddiag}
\SelectTips{cm}{}
\xymatrix{H^p_D(\W, W_0)\ar[r]^-{j^*} &H^p_D(\W)\ar[r]^-{\iota^*}\ar[d]^{\b\circ\a^{-1}}_{\wr} &H^p_{\rm dR}(W_0)\ar[d]^{\b}_{\wr}\\
 {} & H^p(\D^{*} (W))\ar[r]^{\iota^*}&H^p(\D^{*} (W_0)).}
\end{equationth}

Suppose $T$ is a closed $p$-current on $W$ \st\ $\iota^*[T]=0$, i.e., $[T|_{W_0}]=0$. Then there is a class $\vP_T$ in $H^p_D(\W, W_0)$ \st\ 
$[T]=j^\ast\vP_T$. We then say that $T$ is {\em localized at $\tilde S$} and  call $\vP_T$ a {\em  Thom class of $T$ along $\tilde S$}. Here some comments are in order\,:
\begin{enumerate}
\item[(1)]  Any closed current $T$ is localized at $\op{supp}(T)$ in the above sense. It is also localized
at the support of a cycle representive of  $T$.
Thus the set $\tilde S$ as above 
may be different from $\op{supp}(T)$,
see Example \ref{smoothcase} below.
\item[(2)]  The class $\vP_T$ is not uniquely determined, as $j^*$ is not injective in general, however in some cases, there is a natural choice of $\vP_T$, see Examples \ref{Thom-submnf} and \ref{Cherncurrent} below.
\end{enumerate}

In the above situation, suppose $\tilde S$ is a compact set satisfying (*).
Let $(0,\psi_1,\psi_{01})$ be a \v CdR representative of $\vP_T$ and $\{\tilde R_0,\tilde R_1\}$ a system of honeycomb cells adapted to $\W$. Then, from the commutativity of the
diagram obtained by replacing $M$ and $\U$ by $W$ and $\W$ in \eqref{PoinAlex} (see also Remark \ref{remimp}), for any closed  form $\varphi$ in $A^{m'-p}_c(W)$ we have\,:
\begin{equationth}\label{localization}
\langle T, \varphi \rangle 
=\int_{\tilde R_1}\psi_1\wedge\varphi +\int_{\tilde R_{01}} \psi_{01}\wedge\varphi.
\end{equationth}
Thus the value of $T$ is ``concentrated" near $\tilde S$ and  is explicitly given by the above. Also note that the right
hand side does not depend on the choice of $\vP_T$.

\begin{example}\label{Thom-submnf}
Let $C$ be a finite $(m'-p)$-cycle  in  $W$. Then $T_C$ is localized at
$\tilde S=|C|$ and $\vP_C$ is a natural choice for $\vP_{T_C}$.  In particular, if $C=M$ is a compact oriented submanifold of codimension $p$, then the Thom class $\vP_M$ of $M$ is a
natural choice for $\vP_{T_M}$.
\end{example}

We show that, starting from a closed $p$-form $\o$ \st\ $[T_\o]=[T]$, there is a natural way of constructing
such a class. Thus let $\tilde S$ be a closed set of $W$ and let $\W=\{W_0,W_1\}$ be as before.

\begin{proposition}\label{m1}
Let $T$ be a closed $p$-current on $W$ such that $[T|_{W_0}]=0$ and $\o$ a de~Rham representative of $T$.
Then there exists a Thom class $\vP_T$
which is represented by a \v CdR cocycle of the form $(0,\o,-\psi)$ with $\psi$  a $(p-1)$-form on
$W_{01}$ satisfying $\o=-d\psi$ on
$W_{01}$.
\end{proposition}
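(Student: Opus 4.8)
The plan is to write down the cocycle $(0,\o,-\psi)$ explicitly and then check two things: that it is a $D$-cocycle of the subcomplex $A^*(\W,W_0)$, and that $j^*$ sends its class to $[T]$. The first task is to convert the hypothesis $[T|_{W_0}]=0$, a statement about currents, into a statement about the form $\o$. Since $\o$ is a de~Rham representative of $T$ we have $[T_\o]=[T]$ in $H^p(\D^*(W))$, and restricting to the open set $W_0$ gives $[T_{\o|_{W_0}}]=[T_\o|_{W_0}]=[T|_{W_0}]=0$ in $H^p(\D^*(W_0))$. Because the \iso\ $\b$ of \eqref{dRcurr} for $W_0$ sends $[\o|_{W_0}]$ to $[T_{\o|_{W_0}}]$, its injectivity forces $[\o|_{W_0}]=0$ in $H^p_{\rm dR}(W_0)$, so there is a $(p-1)$-form $\eta$ on $W_0$ with $d\eta=\o|_{W_0}$.

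Next I would set $\psi:=-\eta|_{W_{01}}$, so that $\o=-d\psi$ on $W_{01}=W_0\cap W_1$, and consider $\sigma:=(0,\o|_{W_1},-\psi)\in A^p(\W,W_0)$. Using $d\o=0$ and $\o|_{W_{01}}=-d\psi$, the definition of $D$ yields $D\sigma=(0,\,d\o|_{W_1},\,\o|_{W_{01}}-0-d(-\psi))=(0,0,0)$, so $\sigma$ is a $D$-cocycle, and we put $\vP_T:=[\sigma]\in H^p_D(\W,W_0)$; this is a class of the required shape.

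The remaining point is that $j^*\vP_T=[T]$, i.e. that under the identification $\b\circ\a^{-1}\colon H^p_D(\W)\stackrel{\sim}{\lra}H^p(\D^*(W))$ the class $j^*[\sigma]$ corresponds to $[T]$. Since $\a([\o])=[(\o|_{W_0},\o|_{W_1},0)]$ and $(\b\circ\a^{-1})(\a[\o])=\b[\o]=[T_\o]=[T]$, it suffices to show that $(0,\o|_{W_1},-\psi)$ and $(\o|_{W_0},\o|_{W_1},0)$ differ by a $D$-coboundary in $A^p(\W)$. I would verify this with the explicit cochain $\tau:=(-\eta,0,0)\in A^{p-1}(\W)$, for which $D\tau=(-d\eta,\,0,\,0-(-\eta)|_{W_{01}}-0)=(-\o|_{W_0},0,-\psi)$ --- exactly the difference of the two $p$-cochains above. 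This gives $j^*\vP_T=[T]$, so $\vP_T$ is a Thom class of $T$ along $\tilde S$ of the asserted form.

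The only step that requires real care is the first one: one has to be sure that the de~Rham-versus-currents \iso\ of \eqref{dRcurr} is compatible with restriction to $W_0$, so that vanishing of the current class $[T|_{W_0}]$ genuinely forces $\o|_{W_0}$ to be exact as a $C^\infty$ form on $W_0$, not merely as a current. Once this is in hand, everything else is a formal computation in the \v Cech-de~Rham bicomplex of the two-set covering $\W=\{W_0,W_1\}$.
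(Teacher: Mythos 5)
Your proof is correct and follows essentially the same route as the paper: produce a primitive of $\o|_{W_0}$ from the vanishing of $[T|_{W_0}]$, form the cocycle $(0,\o|_{W_1},-\psi)$, and exhibit the coboundary $D(\psi,0,0)$ (your $\tau=(-\eta,0,0)$) connecting it to $(\o|_{W_0},\o|_{W_1},0)$. The only difference is that you spell out the step the paper leaves implicit, namely that the de~Rham--currents \iso\ on $W_0$ upgrades exactness of $T|_{W_0}$ as a current to exactness of $\o|_{W_0}$ as a form.
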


\begin{proof} Just to make sure we denote the restrictions of forms explicitly.
From the assumption, there exists a $(p-1)$-form $\psi$ on $W_0$
such that $d\psi=-\o|_{W_0}$. Hence  the cocycle $(\o|_{W_0}, \o|_{W_1}, 0)$  is cohomologous as a \v CdR cocycle to
$(0, \o|_{W_1}, -\psi|_{W_{01}})$ since
\[
(0, \o|_{W_1}, -\psi|_{W_{01}})-(\o|_{W_0}, \o|_{W_1}, 0)=(d\psi, 0, -\psi|_{W_{01}})=D(\psi, 0, 0).
\]
Thus the class $\vP_{T}=[(0, \o|_{W_1}, -\psi|_{W_{01}})]$ satisfies $[T]=j^*\vP_T$.
\end{proof}

In this case, if $\tilde S$ is a compact set satisfying (*), \eqref{localization} is written as
\[
\int_W\o\wedge\varphi=\int_{\tilde R_1} \o\wedge\varphi -\int_{\tilde R_{01}} \psi\wedge\varphi.
\]
Thus the value of the integral away from $\tilde R_1$ is cut off  and is compensated by an integral on $\tilde R_{01}$.

\begin{example}\label{smoothcase} Let $C$ be an $(m'-p)$-cycle  in  $W$ and
 $\o$  a de~Rham representative of $C$.
Let $\tilde S$ be the support of $C$ and set $W_0=W\ssm \tilde S$. Then $T_\o$ is localized at
$\tilde S$ as $[T_\o]=[T_C]$, although we do not have any precise information
about $\op{supp}(T_{\o})$.
Its   Thom class  $\vP_{T_{\o}}$ 
along $\tilde S$ is represented  by a \v CdR cocycle of the form
$(0, \o, -\psi)$.
\end{example}

\begin{remark} Let $C$, $\o$ and  $\tilde S$ be as in Example \ref{smoothcase}.
Then there is a $(p-1)$-current $R$ \st
\[
T_C-T_\o=dR.
\]
We may think of $R$ as the current defined by a
$(p-1)$-form $\psi$ on $W\ssm \tilde S$ that can be extended as a locally integrable $L^1$ form on $W$
and with $d\psi=-\o$ on $W\ssm \tilde S$. The equation above becomes then
\[
dT_\psi-T_{d\psi}=T_C,
\]
which is a residue formula (cf. \cite[Ch.3,1]{GH}), and the identitiy
\[
D(\psi, 0, 0)+(\o, \o, 0)=(0, \o, -\psi)
\]
may be thought of as the corresponding expression in terms of \v CdR cochains.
\end{remark}

\begin{example}\label{Cherncurrent}
Let $\pi:E\ra W$ be a $C^\infty$ complex vector bundle of rank $r$ and $\na$ a connection for $E$. For $q=0,\dots,r$, 
we have the $q$-th Chern form $c_q(\na)$, which is a closed $2q$-form defining the $q$-th Chern class $c_q(E)$  
 in 
$H^{2q}_{\rm dR}(W)$. We call $T_{c_q(\na)}$ the $q$-th {\em Chern current associated with $\na$}.
Suppose  $E$ admits  $\ell$ sections $\bm{s}=(s_1,\ldots,s_\ell)$
that are linearly independent on the complement of a closed set $\tilde S\subset W$. Then we see that $T_{c_q(\na)}$
is localized at $\tilde S$ and there is a natural
way of choosing a Thom class along $\tilde S$ for $q=r-\ell+1,\dots,r$.

For this, we take an $\bm{s}$-trivial connection $\na_0$ for $E$ on $W_0=W\ssm \tilde S$, i.e., a connection satisfying 
$\na_0 s_i=0$ for $i=1,\ldots, \ell$.  
Denoting by $c_q(\na_0, \na)$ the  Bott difference form (cf. \cite{Bo1}, \cite{Su6}), we have 
$c_q(\na)|_{W_0}-c_q(\na_0)=d\, c_q(\na_0, \na)$.
Since the connection $\na_0$ is 
$\bm{s}$-trivial, it follows that $c_q(\na_0)=0$ so that $c_q(\na)|_{W_0}$ is exact for $q=r-\ell+1,\ldots, r$.
Hence the Chern current $T_{c_q(\na)}$ localizes at $\tilde S$. 
As its Thom class along $\tilde S$, we may take the class  $c_q(E,\bm{s})$ in 
$H^{2q}(W,W\ssm \tilde S)\simeq H^{2q}_D(\W,W_0)$ represented by the cocycle (cf. Proposition \ref{m1})\,:
\[
(0,c_q(\na)|_{W_1}, c_q(\na_0, \na)).
\]
 This class does not depend on the choice of $\na$
or $\na_0$ (cf. \cite[Ch.III, Lemma 3.1]{Su6}) and is a natural choice of Thom class for  $T_{c_q(\na)}$. 
It is the {\em localization of $c_q(E)$ at $\tilde S$ by $\bm{s}$}.

The Thom class of a complex \vb\ as a real oriented bundle may be expressed in this manner 
(cf. \cite[Ch.III, Theorem 4.4]{Su6}).
\end{example}

\subsection{Localized intersection of currents} \label{sec:localized}

Let $W$ be an oriented  $C^\infty$ manifold of dimension $m'$ as before.
For  closed currents $T_1$ and  $T_2$, 
it is possible to define the intersection product $T_1\cdot T_2$ in the homology of $W$ using the \iso\ $\b$
and the Poincar\'e duality (cf. \eqref{dRcurr}, \eqref{defint}). Also if $T_1$ and $T_2$ are localized at compact sets 
$\tilde S_1$ and $\tilde S_2$ satisfying (*),
we may define the localized intersection $(T_1\cdot T_2)_{\tilde S}$ in the homology of $\tilde S=\tilde S_1\cap \tilde S_2$ using Thom
classes $\vP_{T_1}$ and $\vP_{T_2}$ and the Alexander duality (cf. \eqref{locint}).

Here we consider the case $T_1=T_M$  with $M$ a compact oriented sub\mfd\ of 
dimension $m$ in $W$ and  obtain a residue theorem on $M$.
In the sequel we take the Thom class $\vP_M$ of $M$ (cf. Subsection \ref{Thom} (b))   as $\vP_{T_M}$ and set $k=m'-m$. Recall that by the Alexander \iso
\[
A_{W,M}:H^k(W,W\ssm M)\stackrel\sim\lra  H_m(M),
\]
the class $\vP_M$ corresponds to the fundamental class $M$. Let $i: M \hra W$ denote the  inclusion.

\paragraph{First localization\,:} Let $c$ be a class in $\breve H_{m'-p}(W)$. Recall that we have the intersection product 
$(M\cdot c)_M$ localized at $M$ (cf. \eqref{locint}), which is a class in $H_{m-p}(M)$ defined as 
$A_{W,M}(\vP_M\smallsmile P_W^{-1}c)$. We denote it by $M\cdot c$\,:
\[
M\cdot c:=(M\cdot c)_M.
\]
It is sent to $[M]\cdot c$ by $i_*: H_{m-p}(M)\ra\breve H_{m-p}(W)$.

\paragraph{Second localization\,:} Let $\tilde S$ be a compact set  of $W$ satisfying (*).  We have the Alexander
\iso
\[
A_{W,\tilde S}:H^p(W,W\ssm \tilde S)\stackrel\sim\lra  H_{m'-p}(\tilde S).
\]
We set $S=\tilde S\cap M$ and suppose it also satisfies (*).
For a class $c$ in $H_{m'-p}(\tilde S)$, we have the 
class $(M\cdot c)_S$ in $H_{m-p} (S)$ (cf. \eqref{locint}). 

\begin{proposition}\label{diagfund} The following diagrams are  commutative\,:
\[
\SelectTips{cm}{}
\xymatrix{ H^p(W) \ar[d]_{i^*}\ar[r]^-\sim_-{P_W}&\breve H_{m'-p}(W)\ar[d]^{M\cdot\ }\\
 H^p(M)\ar[r]^-\sim_-{P_M}&   H_{m-p}(M),}\hspace{1.7cm}
 \xymatrix{ H^p(W, W\ssm \tilde S) \ar[d]_{i^*}\ar[r]^-\sim_-{A_{W,\tilde S}}&\breve H_{m'-p}(\tilde S)\ar[d]^{(M\cdot\ )_S}\\
 H^p(M, M \ssm S)\ar[r]^-\sim_-{A_{M,S}}&   H_{m-p}(S).}
\]
\end{proposition}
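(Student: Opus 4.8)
The plan is to deduce both commutativities from the single fact, recorded after \eqref{AlexDual} and \eqref{PoinDual}, that $P_W$, $P_M$, $A_{W,\tilde S}$ and $A_{M,S}$ are all given by the left cap product with the relevant fundamental class, together with the naturality of the cap product with respect to the inclusion $i\colon M\hra W$. For the first diagram, start from a class $u\in H^p(W)$. By definition $M\cdot P_W(u)=(M\cdot P_W(u))_M=A_{W,M}(\vP_M\smallsmile P_W^{-1}(P_W(u)))=A_{W,M}(\vP_M\smallsmile u)$. Since $A_{W,M}(\vP_M\smallsmile\,-\,)$ is cap product with $A_{W,M}(\vP_M)=[M]$ (the fundamental class of $M$, viewed in $H_m(M)$ via $A_{W,M}$), we get $M\cdot P_W(u)=[M]\frown u$, the cap being the slant-type pairing $H^p(W)\times H_m(M)\to H_{m-p}(M)$ obtained by restricting cohomology classes of $W$ to $M$. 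By the standard projection (naturality) formula for cap products, $[M]\frown_W u = [M]\frown_M i^*u$, and the right-hand side is exactly $P_M(i^*u)$. Hence the left square commutes.

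The second square is the "localized" analogue of the first, and I would run the identical argument with $(W,M)$ replaced by $(W,\tilde S)$ on top and $(M,S)$ on the bottom, using relative cohomology throughout. Concretely, for $v\in H^p(W,W\ssm\tilde S)$ we have, by the definition \eqref{locint} of the localized intersection product, $(M\cdot A_{W,\tilde S}(v))_S = A_{M,S}\bigl(A_{M,M}^{-1}(M)\smallsmile A_{M\text{-related}}^{-1}A_{W,\tilde S}(v)\bigr)$; unwinding the definitions this is the cap product of $v$, restricted appropriately, with the fundamental class $[M]$, landing in $H_{m-p}(S)$ with $S=\tilde S\cap M$. The key point is that the restriction map $i^*\colon H^p(W,W\ssm\tilde S)\to H^p(M,M\ssm S)$ is well-defined because $i^{-1}(W\ssm\tilde S)=M\ssm S$, and that the cup product $\vP_M\smallsmile(-)$ intertwines with $i^*$ via the cap-product naturality formula exactly as before. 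So once again $(M\cdot A_{W,\tilde S}(v))_S = [M]\frown i^*v = A_{M,S}(i^*v)$, which is the commutativity we want.

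The main obstacle is bookkeeping rather than conceptual: one must check that all the cap/cup identifications are compatible with the chosen orientations and with the identification $\vP_M=A_{W,M}^{-1}(M)$ (Subsection \ref{Thom}(b)), and in particular that the cup product $H^k(W,W\ssm M)\times H^p(W,W\ssm\tilde S)\to H^{k+p}(W,W\ssm S)$ used implicitly in $(M\cdot\,-\,)_S$ restricts correctly under $i^*$ to the corresponding product over $M$. I would handle this by invoking diagram \eqref{PoinAlex} (and its evident relative refinement) twice — once over $W$ and once over $M$ — and gluing the two copies along $i$; the compatibility of $i_*$ with the Alexander homomorphisms there is precisely what makes the two squares fit together. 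An alternative, more hands-on route is to represent $\vP_M$ by the explicit \v CdR cocycle of Proposition \ref{Thomloc} near $M$, represent $v$ by a relative \v CdR cocycle on the covering $\{W\ssm\tilde S, W_1\}$, restrict everything to $M$ using honeycomb cells adapted to both coverings simultaneously, and verify the identity \eqref{corralex} on both sides; this avoids any appeal to abstract naturality but replaces it with a (routine) integral computation using that $R_1^M\subset R_1^W\cap M$ and $R_{01}^M\subset R_{01}^W\cap M$ up to boundary terms that pair to zero against closed forms.
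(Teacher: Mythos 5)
Your proposal is correct and follows essentially the same route as the paper: the authors likewise note that the Poincar\'e and Alexander isomorphisms are given by left cap product with the fundamental class and then derive $A_{M,S}(i^*u)=A_{W,S}(\vP_M\smallsmile u)=(M\cdot A_{W,\tilde S}u)_S$ from standard cap/cup product identities (they prove the second square and remark that the first is similar). Your extra discussion of an explicit \v Cech--de~Rham verification is a fallback the paper does not need or use.
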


\begin{proof} We prove the commutativity of the second diagram,  the proof for the first one  being  similar.
We  have the cup product followed by the Alexander isomorphism\,:
\[
H^k(W, W\ssm  M)\times H^p(W, W\ssm  \tilde S) \stackrel\smallsmile\lra
  H^{k+p}(W, W\ssm  S)\stackrel{A_{W, S}}\lra H_{m-p}(S).
\]
Nothing that the Alexander \iso\ is given by the left cap product with the fundamental class and using properties of cap and cup products, we have,
for a class $u$ in $H^p(W, W\ssm  \tilde S)$,
\[
  A_{M, S}(i^*u) =A_{W, S}(\vP_M\smallsmile u)=(M\cdot A_{W,\tilde S}u)_S.
\]
\end{proof}

In view of the above,  we define intersection products in a more general situation where $M$ is not necessarily a sub\mfd\ of $W$\,:

\begin{definition}\label{prodmap} Let $W$ and $M$ be oriented $C^\infty$ \mfd s of dimensions $m'$ and $m$, \r, and $F:M\ra W$  a $C^\infty$ map.  We define the intersection product $M\cdot_F\ $ so that the first diagram
below is commutative. Also, for a compact set $\tilde S$ satisfying (*) in $W$, we set $S=F^{-1}(\tilde S)$ and suppose 
$S$ is compact and satisfy (*).  We then define the localized intersection product $(M\cdot_F\ )_S$ 
so that the second diagram is commutative\,:
\[
\SelectTips{cm}{}
\xymatrix{ H^p(W) \ar[d]_{F^*}\ar[r]^-\sim_-{P_W}&\breve H_{m'-p}(W)\ar[d]^{M\cdot_F\ }\\
 H^p(M)\ar[r]^-\sim_-{P_M}& \breve  H_{m-p}(M),}\hspace{1.7cm}
 \xymatrix{ H^p(W, W\ssm \tilde S) \ar[d]_{F^*}\ar[r]^-\sim_-{A_{W,\tilde S}}&\breve H_{m'-p}(\tilde S)\ar[d]^{(M\cdot_F\ )_S}\\
 H^p(M, M \ssm S)\ar[r]^-\sim_-{A_{M,S}}&   H_{m-p}(S).}
\]
\end{definition}

\begin{remark} 1.  Let $M$ be  a  sub\mfd\ of $W$ and $i:M\hra W$  the inclusion.
If $M$ is compact,  $M\cdot_i \ $ is the product
 $M\cdot\ $ defined before. We may also define the  product $M\cdot\ $ as $M\cdot_i \ $ in the case 
 $M$ is not compact.
\smallskip

\noindent
2. The products as above are defined in the algebraic category in \cite{Fu}.
\end{remark}

For a closed $p$-current $T$ on $W$, we define
\[
M\cdot_FT:=M\cdot_FP\b^{-1}[T].
\]
Suppose $T$ is localized at $\tilde S$. Then taking a Thom class $\vP_{T}$   of $T$ along $\tilde S$, we define 
the {\em residue}  of $\vP_{T}$ on $M$ at $S$ by
\[
\op{Res}(F^*\vP_T,S):=(M\cdot_FA(\vP_T))_S.
\]

Suppose $S$ has a finite number of connected components $(S_\l)_\l$. Then we have a decomposition
$H_{m-p} (S)=\bigoplus_\l H_{m-p} (S_\l)$ and accordingly $\op{Res}(F^*\vP_T,S)$ determines
a class in $H_{m-p} (S_\l)$, which  is denoted by $\op{Res}(F^*\vP_T,S_\l)$.
We can state the following  general residue theorem, which follows from the commutativity of the diagram \eqref{PoinAlex}\,:

\begin{theorem}\label{res-m} Let $W$ and $M$ be oriented $C^\infty$ \mfd s of dimensions $m'$ and $m$, \r, and $F:M\ra W$  a 
$C^\infty$ map.  Let $T$ be a closed $p$-current on $W$ \st\ $[T|_{W\ssm \tilde S}]=0$ for some  compact subset 
$\tilde S$ satisfying (*) in $W$.
Suppose that  $S=F^{-1}\tilde S$ is compact, satisfies (*) and  has a finite number of connected components $(S_\l)_\l$. Then

\begin{enumerate}
\item[{\rm (1)}] For each $\l$ we have a class $\op{Res}(F^*\vP_T,\, S_\l)$ in $H_{m-p}(S_\l)$.
\item[{\rm (2)}] We have the ``residue formula"\,:
\[
M\cdot_F T=\sum_\l (i_\l)_*\op{Res}(F^*\vP_T,\, S_\l)\qquad\text{in}\ \ \breve H_{m-p}(M),
\]
where $i_\l:S_\l\hra M$ denotes the inclusion.
\end{enumerate}
\end{theorem}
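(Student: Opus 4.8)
The plan is to derive the residue formula by a single diagram chase, with the commutative square \eqref{PoinAlex} (applied to the pair $(M,S)$) as the central ingredient and the two defining diagrams of Definition \ref{prodmap} supplying the rest. Part (1) requires no real argument: since $S$ is compact with finitely many connected components $(S_\l)_\l$, each $S_\l$ is open and closed in $S$, so $H_{m-p}(S)=\bigoplus_\l H_{m-p}(S_\l)$; fixing once and for all a Thom class $\vP_T$ of $T$ along $\tilde S$, the class $\op{Res}(F^*\vP_T,S)=(M\cdot_F A(\vP_T))_S$ then has a well-defined $S_\l$-component, which we call $\op{Res}(F^*\vP_T,S_\l)$. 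Under the same decomposition the inclusion-induced map factors as $i_*=\sum_\l(i_\l)_*$, so (2) follows at once from the single identity $i_*\op{Res}(F^*\vP_T,S)=M\cdot_F T$ in $\breve H_{m-p}(M)$, where $i:S\hra M$.

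To prove that identity, I would first note that $F^{-1}(\tilde S)=S$ forces $F^{-1}(W\ssm\tilde S)=M\ssm S$, so $F$ induces $F^*:H^p(W,W\ssm\tilde S)\ra H^p(M,M\ssm S)$ compatibly, by functoriality, with the restriction maps $j^*$ to absolute cohomology. Write $u=\vP_T$. The second diagram of Definition \ref{prodmap} gives $A_{M,S}(F^*u)=(M\cdot_F A_{W,\tilde S}(u))_S=\op{Res}(F^*\vP_T,S)$. Applying \eqref{PoinAlex} to $(M,S)$ yields $i_*A_{M,S}(F^*u)=P_M\bigl(j^*(F^*u)\bigr)$, and naturality turns $j^*(F^*u)$ into $F^*(j^*u)$, where now $j^*u\in H^p(W)$. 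By the very definition of a Thom class of a current, read off from the fundamental diagram \eqref{funddiag} together with the identifications $H^p(W)\simeq H^p_{\rm dR}(W)\simeq H^p_D(\W)$ and the \iso\ $\b$, the class $j^*u=j^*\vP_T$ is exactly $\b^{-1}[T]$. Hence $P_M(F^*j^*u)=P_M(F^*\b^{-1}[T])$, and the first diagram of Definition \ref{prodmap} rewrites this as $(M\cdot_F)(P_W\b^{-1}[T])$, which is $M\cdot_F T$ by the definition of the latter. Chaining the four equalities gives $i_*\op{Res}(F^*\vP_T,S)=M\cdot_F T$, and combining with the direct-sum decomposition above finishes (2).

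The arithmetic is routine; the one point that needs genuine care — such an obstacle as there is — is the bookkeeping of the identification chains $H^p(W)\simeq H^p_{\rm dR}(W)\simeq H^p_D(\W)$ and $H^p(W,W\ssm\tilde S)\simeq H^p_D(\W,W_0)$, together with the verification that under these the chosen $\vP_T$ really satisfies $j^*\vP_T=\b^{-1}[T]$, i.e. that the hypothesis $[T|_{W\ssm\tilde S}]=0$ is precisely what makes $\vP_T$ exist and endows it with this property. This is exactly what \eqref{funddiag} records, so no new input is needed; one should also be explicit that $A(\vP_T)$ in the definition of the residue means $A_{W,\tilde S}(\vP_T)\in H_{m'-p}(\tilde S)$, after which the second diagram of Definition \ref{prodmap} applies verbatim. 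Everything else — functoriality of $j^*$ under $F^*$ and the splitting of the homology of a compact set into its connected components — is standard.
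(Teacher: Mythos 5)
Your proof is correct and follows essentially the same route as the paper, which simply notes that the theorem follows from the commutativity of \eqref{PoinAlex}; your diagram chase (the two squares of Definition \ref{prodmap}, naturality of $F^*$ with respect to $j^*$, the identity $[T]=j^*\vP_T$ from \eqref{funddiag}, and the splitting $H_{m-p}(S)=\bigoplus_\l H_{m-p}(S_\l)$) is exactly the verification the authors leave implicit.
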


We may express $M\cdot_F T$ and $\op{Res}(F^*\vP_T,\, S_\l)$ as follows.
Let $T$ be a closed $p$-current on $W$ and $\o$ a de~Rham representative of $T$. From \eqref{corrpoinform}
and \eqref{corrpoin}, we have\,:

\begin{proposition}\label{prodindR} {\rm (1)} The intersection product $M\cdot_F T$ in $\breve H_{m-p}(M)$ is represented
by a cycle $C$ \st\
\[
\int_MF^*\o\wedge\varphi=\int_C\varphi
\]
for any closed form $\varphi$ in $A_c^{m-p}(M)$.
\vv

\noindent
{\rm (2)} In the \iso\ 
$\breve H_{m-p}(M)\simeq H^{m-p}_c(M)^*$,  $M\cdot_F T$ corresponds to the functional on $H^p_c(M)$ that
assigns to $[\varphi]$ the left hand side above.
\vv

\noindent
{\rm (3)}
In particular, if $p=m$ and if $M$ is compact, $M\cdot_F T$ is a number given by
\[
M\cdot_F T=\int_MF^*\o.
\]
\end{proposition}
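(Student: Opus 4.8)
The plan is to unwind Definition \ref{prodmap} and then quote the two descriptions of the Poincar\'e duality recalled in Subsection \ref{ssecPoin}. First I would note that, by the very definition of a de~Rham representative, $\b[\o]=[T_\o]=[T]$ in $H^p(\D^*(W))$, so $\b^{-1}[T]=[\o]$ and therefore $M\cdot_F T=M\cdot_F\bigl(P_W[\o]\bigr)$. Since $\o$ is closed, $F^*\o$ is a closed $p$-form on $M$, hence its class lies in $H^p_{\rm dR}(M)$, and the commutativity of the first diagram in Definition \ref{prodmap} gives
\[
M\cdot_F T=P_M\bigl([F^*\o]\bigr)\qquad\text{in}\ \ \breve H_{m-p}(M).
\]

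Next I would read (1) and (2) directly off this identity. Statement (2) is simply the description \eqref{corrpoinform} of $P_M$ applied to the closed form $F^*\o$: under the isomorphism $\breve H_{m-p}(M)\simeq H^{m-p}_c(M)^*$ the class $M\cdot_F T$ corresponds to the functional $[\varphi]\mapsto\int_M F^*\o\wedge\varphi$. Statement (1) is the other description \eqref{corrpoin} of the same class, namely $P_M([F^*\o])=[C]$ for an $(m-p)$-cycle $C$ characterized by $\int_M F^*\o\wedge\varphi=\int_C\varphi$ for every closed $\varphi\in A^{m-p}_c(M)$.

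For (3) I would specialize to $p=m$. Then $m-p=0$, and since $M$ is compact, $\breve H_0(M)=H_0(M)$ and the constant function $1$ is a closed, compactly supported $0$-form; substituting $\varphi=1$ into the characterization in (1) yields $\int_M F^*\o=\int_C 1$, and $\int_C 1$ is precisely the number that the $0$-dimensional class $M\cdot_F T=[C]$ represents. There is essentially no obstacle here, as the whole statement is bookkeeping with Definition \ref{prodmap} and formulas \eqref{corrpoinform}, \eqref{corrpoin}; the only points worth a sentence are that $F^*$ preserves closedness, so that $P_M$ may legitimately be applied to $[F^*\o]$, and that pairing a $0$-cycle against the constant $1$ recovers its degree.
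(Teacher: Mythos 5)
Your proposal is correct and follows exactly the route the paper intends: the paper gives no written proof beyond the remark that the proposition follows ``from \eqref{corrpoinform} and \eqref{corrpoin},'' and your unwinding of Definition \ref{prodmap} to get $M\cdot_F T=P_M([F^*\o])$ followed by the two descriptions of the Poincar\'e dual is precisely that argument spelled out. Nothing further is needed.
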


Suppose $T$ satisfies the conditions in Theorem \ref{res-m}. Let $W_0=W\ssm \tilde S$ and $W_1$ a \nbd\ of $\tilde S$ and 
consider the covering $\W=\{W_0,W_1\}$.
Let $\vP_T$ be represented by a \v CdR cocycle $(0,\psi_1,\psi_{01})$ in $A^p(\W,W_0)$.
For each $\l$ we take a regular \nbd\ $U_\l$ of $S_\l$ in $M$ \st\
$F(U_\l)\subset W_1$ and that $U_\l\cap U_\mu=\emptyset$ if $\l\ne\mu$. 
For each $\l$, we take a compact sub\mfd\ $R_\l$ of dimension $m$ with $C^\infty$ boundary in $U_\l$, containing $S_\l$ in its interior. 
From \eqref{corralexform} and \eqref{corralex}, we have\,:

\begin{proposition}\label{propint} {\rm (1)} The residue $\op{Res}(F^*\vP_T,\, S_\l)$ in $H_{m-p}(S_\l)$ is represented
by a cycle $C$ \st\
\[
\int_{R_\l}F^*\psi_1\wedge\varphi+\int_{R_{0\l}}F^*\psi_{01}\wedge\varphi=\int_C\varphi
\]
for any closed form $\varphi$ in $A^{m-p}(U_\l)$.
\vv

\noindent
{\rm (2)} In the \iso\ 
$H_{m-p}(S_\l)\simeq H^{m-p}_{\rm dR}(U_\l)^*$,  $\op{Res}(F^*\vP_T,\, S_\l)$ corresponds to the
 functional on $H^{m-p}_{\rm dR}(U_\l)$ that assigns to $[\varphi]$ the left hand side above.
\vv

\noindent
{\rm (3)} In particular, if $p=m$, the residue is a number given by
\[
\op{Res}(F^*\vP_T,\, S_\l)=\int_{R_\l}F^*\psi_1+\int_{R_{0\l}}F^*\psi_{01}.
\]
\end{proposition}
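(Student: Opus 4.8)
The plan is to unwind the definition of the residue and then apply the explicit description of the Alexander isomorphism from Subsection~\ref{relCdR}, localized near each component $S_\lambda$.

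\emph{Step 1 (from the residue to an Alexander dual).} By the commutativity of the second diagram in Definition~\ref{prodmap}, applied to $\vP_T\in H^p(W,W\ssm\tilde S)$, one has $(M\cdot_F A_{W,\tilde S}(\vP_T))_S=A_{M,S}(F^*\vP_T)$ in $H_{m-p}(S)$, hence $\op{Res}(F^*\vP_T,S)=A_{M,S}(F^*\vP_T)$; by the definition of $\op{Res}(F^*\vP_T,S_\lambda)$ recalled before Theorem~\ref{res-m}, this class's component in the summand $H_{m-p}(S_\lambda)$ of $H_{m-p}(S)=\bigoplus_\mu H_{m-p}(S_\mu)$ is $\op{Res}(F^*\vP_T,S_\lambda)$. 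Pulling the covering $\W=\{W_0,W_1\}$ back by $F$ gives the covering $\{M\ssm S,\,F^{-1}(W_1)\}$ of $M$, under which $F^*\vP_T$ is represented by the relative \v CdR cocycle $(0,F^*\psi_1,F^*\psi_{01})$. Since $F(U_\lambda)\subset W_1$ for every $\lambda$, we may replace $F^{-1}(W_1)$ by $U_1:=\bigsqcup_\lambda U_\lambda$, which is a regular \nbd\ of $S$ (a disjoint union of regular \nbd s of the $S_\lambda$), keeping the same cocycle.

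\emph{Step 2 (the explicit Alexander dual via honeycomb cells).} Here I use the freedom in the choice of a system of honeycomb cells adapted to $\{M\ssm S,\,U_1\}$. Since the $U_\lambda$ are pairwise disjoint and $S_\lambda$ lies in the interior of the compact submanifold $R_\lambda\subset U_\lambda$, the pair $R_1:=\bigsqcup_\lambda R_\lambda$, $R_0:=\overline{M\ssm R_1}$ is such a system, with $R_1$ compact and $R_0\cap R_1=\bigsqcup_\lambda R_{0\lambda}$, $R_{0\lambda}=R_0\cap R_\lambda$ oriented as in Subsection~\ref{CdR}. Applying \eqref{corralexform}, the class $A_{M,S}(F^*\vP_T)\in H_{m-p}(S)$ corresponds, under $H_{m-p}(S)\simeq H^{m-p}_{\rm dR}(U_1)^*$, to the functional $[\varphi]\mapsto\sum_\lambda\bigl(\int_{R_\lambda}F^*\psi_1\wedge\varphi+\int_{R_{0\lambda}}F^*\psi_{01}\wedge\varphi\bigr)$. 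Because $U_1=\bigsqcup_\lambda U_\lambda$, the decompositions $H^{m-p}_{\rm dR}(U_1)^*=\bigoplus_\lambda H^{m-p}_{\rm dR}(U_\lambda)^*$ and $H_{m-p}(S)=\bigoplus_\lambda H_{m-p}(S_\lambda)$ match under the Alexander isomorphism; reading off the $\lambda$-th summand gives precisely the functional of assertion (2), and \eqref{corralex} rewrites this functional via a cycle $C$ in $S_\lambda$ with $\int_{R_\lambda}F^*\psi_1\wedge\varphi+\int_{R_{0\lambda}}F^*\psi_{01}\wedge\varphi=\int_C\varphi$ for closed $\varphi\in A^{m-p}(U_\lambda)$, which is assertion (1).

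\emph{Step 3 (the case $p=m$).} When $p=m$ we have $m-p=0$; as $S_\lambda$ is connected we may take $U_\lambda$ connected, so $H^0_{\rm dR}(U_\lambda)$ is spanned by the constant function $\varphi=1$ and $H_0(S_\lambda)=\C$. Evaluating the functional of (2) at $\varphi=1$ yields $\op{Res}(F^*\vP_T,S_\lambda)=\int_{R_\lambda}F^*\psi_1+\int_{R_{0\lambda}}F^*\psi_{01}$, a number, the integrands being an $m$-form on the $m$-\mfd\ $R_\lambda$ and an $(m-1)$-form on $R_{0\lambda}$, respectively.

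The point needing the most care is, in Step~2, the passage from the single functional attached to $A_{M,S}(F^*\vP_T)$ to its component-by-component expression: one must verify that the block form of the honeycomb cells genuinely splits the pairing \eqref{intCdR}/\eqref{corralexform} as a direct sum over $\lambda$ and that this splitting is the one used to define $\op{Res}(F^*\vP_T,S_\lambda)$. This is the naturality of the Alexander isomorphism under the excision of the disjoint open sets $U_\lambda$, combined with the independence of the right-hand side of \eqref{corralex} of all auxiliary choices (partition of unity, honeycomb system, regular \nbd), exactly as established in Subsection~\ref{relCdR}. Granting this, the rest is the routine bookkeeping of pulling back \v CdR cocycles by $F$ and the degree count in Step~3.
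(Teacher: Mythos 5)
Your proposal is correct and follows exactly the route the paper intends: the paper states Proposition \ref{propint} without a separate proof, deriving it directly from \eqref{corralexform} and \eqref{corralex} applied to the pulled-back cocycle $(0,F^*\psi_1,F^*\psi_{01})$ on the covering $\{M\ssm S,\bigsqcup_\l U_\l\}$ with honeycomb cells $R_1=\bigsqcup_\l R_\l$. Your Steps 1--3 simply make explicit the unwinding (residue $=A_{M,S}(F^*\vP_T)$, component-wise splitting over the disjoint $U_\l$, evaluation at $\varphi=1$ when $p=m$) that the paper leaves to the reader.
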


\begin{example}\label{rescycle}
Let $C$ be a finite $(m'-p)$-cycle on $W$, $\tilde S=|C|$ and $S=F^{-1}\tilde S$. We take $\vP_C$ as $\vP_{T_C}$. Then
\[
M\cdot_F T_C=M\cdot_F [C],\qquad \op{Res}(F^*\vP_C,\, S_\l)=(M\cdot_F C)_{S_\l}
\]
and the residue formula becomes
\[
M\cdot_F [C]=\sum_\l(i_\l)_*(M\cdot_F C)_{S_\l}\qquad\text{in}\ \ \breve H_{m-p}(M).
\]
In particular, if $M$ is compact and $p=m$,
\[
M\cdot_F [C]=\sum_\l(M\cdot_F C)_{S_\l}.
\]

Let $\o$ be a de~Rham representative of $C$. Then $T_\o$ is localized at $\tilde S$. As $\vP_{T_\o}$ we may take 
the class represented by a cocycle of the form $(0,\o,-\psi)$ (cf. Example \ref{smoothcase}). As a homology class,
$M\cdot_F T_\o=M\cdot_F [C]$. As a functional, it is given as in Proposition \ref{prodindR}. Also $\op{Res}(F^*\vP_{T_\o},\, S_\l)$
is a functional given as in Proposition \ref{propint} with $\psi_1=\o$ and $\psi_{01}=-\psi$.
\end{example}

See Propositions \ref{propisolated} and \ref{coinnonisol} below for  explicit expressions of $(M\cdot_F C)_{S_\l}$ in some special cases.

\begin{example}\label{exmult}
Let $W$ be a complex \mfd\ of dimension $n'$ and $M$ a complex sub\mfd\ of dimension $n$.
Also let $V$ be an analytic subvariety of $W$ of dimension $k$. Recall that there exists a subanalytic triangulation of $W$
compatible with $M$, $V$ and $\op{Sing}(V)$, the singular set of $V$. Thus $V$ may be thought of as a chain, which is not $C^\infty$
but still has the associated current $T_V$ of integration.
Moreover it is a cycle, as the real codimension of $\op{Sing}(V)$ in $V$ is greater than or equal to two. If $n+k=n'$ and if $p$ is an
isolated point of $M\cap V$, we have
\[
(M\cdot V)_p\ge \op{mult}_{p}(V),
\]
the multiplicity of $V$ at $p$. The equality holds, by definition, if $M$ is general with respect to $V$, i.e., the intersection of the
tangent space of $M$ at $p$
and the tangent cone of $V$ at $p$ consists only of $p$.
Note that $\op{mult}_{p}(V)$ coincides with the Lelong number of $T_V$ at $p$ (e.g., \cite[Ch.3, 2]{GH}).
\end{example}

We finish this section by giving a formula for the residue at  a non-isolated component. Thus,  in the  situation of 
Theorem \ref{res-m}, suppose that $S_\l$ is an oriented sub\mfd\ of $M$
of dimension $m-p$ with orientation compatible with that of $M$ in the sense described in Subsection \ref{Thom} (b). Let $p_\l$ be a point in $S_\l$ and $B_\l$ a small open ball of dimension $p$ in $M$ transverse
to $S_\l$ at $p_\l$. We orient $B_\l$ so that the orientation of $B_\l$ followed by that of $S_\l$ gives the
orientation of $M$. Setting $F_\l=F|_{B_\l}$, we have the commutative diagram
\[
\SelectTips{cm}{}
\xymatrix{ H^p(W,W\ssm \tilde S) \ar[d]_{F_\l^*}\ar[r]^-\sim_-{A_{W,\tilde S}}&H_{m'-p}(\tilde S) 
\ar[d]^{(B_\l\cdot_{F_\l}\ )_{p_\l}} \\
 H^p(B_\l,B_\l\ssm p_\l)\ar[r]^-\sim_-{A_{B_\l,p_\l}}& H_0 (p_\l).}
\]
We have the residue $\op{Res}(F_\l^*\vP_T,p_\l)=(B_\l\cdot_{F_\l} A(\vP_T))_{p_\l}$ in $H_0 (p_\l)\simeq\C$ so that it is a number.

\begin{theorem}\label{resnoniso} In the  situation of Theorem \ref{res-m}, suppose that $S_\l$ is an oriented  sub\mfd\ of $M$ of dimension $m-p$ and let $p_\l$  and $F_\l$ be as above. Then we have\,:
\[
\op{Res}(F^*\vP_T,S_\l)=\op{Res}(F_\l^*\vP_T,p_\l)\cdot S_\l\qquad\text{in}\ \ H_{m-p}(S_\l).
\]
\end{theorem}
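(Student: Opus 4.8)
The plan is to exploit that, under the hypotheses, the relevant relative cohomology group localized at $S_\l$ is one‑dimensional, so that the residue is forced to be a scalar multiple of the fundamental class $S_\l\in H_{m-p}(S_\l)$; the scalar is then pinned down by restricting everything to the transverse ball $B_\l$. Concretely, I would first take, as in Proposition \ref{propint}, pairwise disjoint regular \nbd s $U_\mu$ of the components $S_\mu$, choosing $U_\l$ to be a tubular \nbd\ of $S_\l$, identified with a disk bundle of the normal bundle $N_{S_\l}$ of $S_\l$ in $M$; this bundle has rank $p$ and, by the compatibility of orientations in the statement, it is oriented in the sense of Subsection \ref{Thom} (b). Excision gives $H^p(M,M\ssm S)\simeq\bigoplus_\mu H^p(U_\mu,U_\mu\ssm S_\mu)$; write $(F^*\vP_T)_\l\in H^p(U_\l,U_\l\ssm S_\l)$ for the $S_\l$‑component of $F^*\vP_T$, i.e.\ its restriction along the inclusion of pairs $(U_\l,U_\l\ssm S_\l)\hra(M,M\ssm S)$ (valid since $U_\l$ meets $S$ only in $S_\l$). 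By the Thom \iso\ $T_{S_\l}:H^0(S_\l)\overset{\sim}{\to}H^p(U_\l,U_\l\ssm S_\l)$ (which uses the orientation of $N_{S_\l}$) and the connectedness of $S_\l$, the group $H^p(U_\l,U_\l\ssm S_\l)\simeq\C$ is generated by the Thom class $\vP_{S_\l}$, so $(F^*\vP_T)_\l=c_\l\vP_{S_\l}$ for a unique $c_\l\in\C$.

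Next I would use that the Alexander \iso\ is the left cap product with the fundamental class, hence is natural with respect to the above excision and compatible with the splitting $H_{m-p}(S)=\bigoplus_\mu H_{m-p}(S_\mu)$. Consequently the $S_\l$‑component of $\op{Res}(F^*\vP_T,S)=A_{M,S}(F^*\vP_T)$, which by definition is $\op{Res}(F^*\vP_T,S_\l)$, equals $A_{U_\l,S_\l}\big((F^*\vP_T)_\l\big)$. By the defining property \eqref{ThomClass} of the Thom class of a sub\mfd, $A_{U_\l,S_\l}(\vP_{S_\l})=S_\l$, whence
\[
\op{Res}(F^*\vP_T,S_\l)=c_\l\,S_\l\qquad\text{in}\ \ H_{m-p}(S_\l).
\]
It remains to show $c_\l=\op{Res}(F_\l^*\vP_T,p_\l)$.

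For this I would restrict a second time, along $j_\l:(B_\l,B_\l\ssm p_\l)\hra(U_\l,U_\l\ssm S_\l)$; in the tubular \nbd\ picture $B_\l$ is a positively oriented piece of the fiber of $N_{S_\l}$ over $p_\l$, the orientation of $B_\l$ having been chosen in the statement precisely so that ``$B_\l$ followed by $S_\l$'' orients $M$, which is the convention orienting $N_{S_\l}$. As $B_\l$ is small and transverse to $S_\l$ at $p_\l$, it meets $S$ only at $p_\l$, so $F(B_\l\ssm p_\l)\subset W\ssm\tilde S$, the map $F_\l:(B_\l,B_\l\ssm p_\l)\to(W,W\ssm\tilde S)$ is a map of pairs, and $F_\l^*\vP_T=j_\l^*(F^*\vP_T)_\l$. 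By Proposition \ref{Thomloc}, on a trivialization of $N_{S_\l}$ near $p_\l$ the class $\vP_{S_\l}$ is represented by $(0,0,-\rho^*\psi_p)$, and restricting along $j_\l$ shows that $j_\l^*\vP_{S_\l}$ is the Thom class $\vP_{p_\l}$ of the point $p_\l$ in $B_\l$, a generator of $H^p(B_\l,B_\l\ssm p_\l)\simeq\C$ (the orientation choices leaving no extraneous sign). Hence $F_\l^*\vP_T=c_\l\vP_{p_\l}$, and applying $A_{B_\l,p_\l}$ together with $A_{B_\l,p_\l}(\vP_{p_\l})=p_\l$ gives $\op{Res}(F_\l^*\vP_T,p_\l)=A_{B_\l,p_\l}(F_\l^*\vP_T)=c_\l$ in $H_0(p_\l)\simeq\C$. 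Combined with the displayed formula, this yields the theorem.

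The main obstacle is the last step: verifying that the pullback to the transverse ball $B_\l$ of the sub\mfd\ Thom class $\vP_{S_\l}$ is exactly the local Thom class $\vP_{p_\l}$, with the correct orientation and no sign. This is where the explicit \v Cech-de~Rham description of the Thom class (Proposition \ref{Thomloc}, following \cite{Su6}) and the orientation conventions fixed in the statement are essential. The naturality and locality properties of the Alexander and Thom \iso s used in the first two steps are standard, but should be spelled out in the $C^\infty$ (or combinatorial) framework in use here.
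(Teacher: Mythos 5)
Your argument is correct, but it takes a genuinely different route from the paper's. The paper proves the theorem by direct computation in the \v Cech--de~Rham framework: it represents $\vP_T$ by a cocycle $(0,\psi_1,\psi_{01})$, tests the residue against an arbitrary closed $(m-p)$-form $\varphi=\pi^*\t+d\tau$ on the tubular \nbd\ $U_\l$, and uses the projection formula and Stokes' theorem to reduce the pairing to the fiber integral $(\pi_\l)_*F^*\psi_1+(\pi_{0\l})_*F^*\psi_{01}$; compatibility of integration along the fiber with the differentials shows this $0$-form is closed, hence constant on the connected $S_\l$, and evaluating at $p_\l$ identifies the constant with $\op{Res}(F_\l^*\vP_T,p_\l)$. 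You instead argue structurally: excision and the Thom \iso\ for the normal bundle $N_{S_\l}$ show that $H^p(U_\l,U_\l\ssm S_\l)\simeq H^0(S_\l)\simeq\C$ is generated by $\vP_{S_\l}$, so the localized class is forced to be $c_\l\vP_{S_\l}$, and the scalar is extracted by restricting to the transverse ball $B_\l$ where $j_\l^*\vP_{S_\l}=\vP_{p_\l}$. Both approaches hinge on the same orientation conventions and on Proposition \ref{Thomloc}, and your identification of the $S_\l$-component of $A_{M,S}(F^*\vP_T)$ with $A_{U_\l,S_\l}$ of the restricted class is exactly what Proposition \ref{propint} encodes, so there is no gap. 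What your version buys is conceptual economy --- the answer is a multiple of $S_\l$ for pure rank reasons, with no integral manipulation --- at the cost of invoking naturality of the Alexander and Thom \iso s under excision and restriction to fibers, which must be justified in the $C^\infty$ honeycomb-cell framework; the paper's computation buys self-containedness and an explicit fiber-integral formula for the residue, which is in the spirit of its residue-theoretic machinery. You correctly flag the delicate point (that $j_\l^*\vP_{S_\l}=\vP_{p_\l}$ with no sign), and your orientation bookkeeping --- $B_\l$ followed by $S_\l$ orienting $M$ matches the fiber-first convention for $N_{S_\l}$ from Subsection \ref{Thom} (b) --- resolves it correctly.
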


\begin{proof} We try to find $\op{Res}(F^*\vP_T,S_\l)$ by  Proposition \ref{propint}. As $U_\l$, we take a tubular \nbd\ of $S_\l$ with a $C^\infty$ projection
$\pi:U_\l\ra S_\l$, which gives $U_\l$ the structure of a bundle of open balls of dimension $p$. Setting $U_0=U_\l\ssm S_\l$, we consider the covering $\U_\l=\{U_0,U_\l\}$ of  $U_\l$.
As $R_\l$, we take a bundle on $S_\l$ of closed balls of dimension $p$ in $U_\l$.
Then $R_{0\l}$ is a bundle on $S_\l$ of $(p-1)$-spheres.
We denote the restrictions of $\pi$ to $R_\l$ and $R_{0\l}$ by $\pi_\l$ and $\pi_{0\l}$, \r.
For a closed $(m-p)$-form $\varphi$  on $U_\l$, we compute the integral
\[
I:=\int_{R_\l}F^*\psi_1\wedge\varphi+\int_{R_{0\l}}F^*\psi_{01}\wedge\varphi.
\]

Since $\pi$ induces an \iso\ $\pi^*:H^{m-p}_{\rm dR}(S_\l)\stackrel\sim\ra H^{m-p}_{\rm dR}(U_\l)$, there
exist a closed $(m-p)$-form $\t$ on $S_\l$ and an $(m-p-1)$-form $\tau$ on $U_\l$ \st\
\[
\varphi=\pi^*\t+d\tau.
\]
Using the projection formula, the fact that $dF^*\psi_1=0$ and the Stokes formula, we have
\[
\int_{R_\l}F^*\psi_1\wedge\varphi=\int_{S_\l}(\pi_\l)_*F^*\psi_1\cdot\t+
(-1)^{p+1}\int_{R_{0\l}}F^*\psi_1\wedge\tau,
\]
where $(\pi_\l)_*$ denotes the integration along the fiber of $\pi_\l$. Note that $(\pi_\l)_*F^*\psi_1$ is a 
$C^\infty$ \fcn\ on $S_\l$. Noting that $dF^*\psi_{01}=F^*\psi_1$ on $U_{0\l}$ and $\partial R_{0\l}=\emptyset$, we also compute to get
\[
\int_{R_{0\l}}F^*\psi_{01}\wedge\varphi=\int_{S_\l}(\pi_{0\l})_*F^*\psi_{01}\cdot\t+
(-1)^p\int_{R_{0\l}}F^*\psi_1\wedge\tau.
\]
Thus we have
\[
I=\int_{S_\l}((\pi_\l)_*F^*\psi_1+(\pi_{0\l})_*F^*\psi_{01})\cdot\t
\]

Now recall that we have the integration along the fiber on the \v CdR cochains\,:
\[
\pi_*:A^q(\U_\l,U_0)\lra A^{q-p}(S_\l),
\]
which assigns to $\sigma=(0,\sigma_\l,\sigma_{0\l})$ the form $(\pi_\l)_*\sigma_\l+(\pi_{0\l})_*\sigma_{0\l}$
on $S_\l$. Moreover it is compatible with the differentials $D$ and $d$ (cf. \cite[Ch.II, 5]{Su6}). Since 
 $(0, F^*\psi_1|_{U_\l},F^*\psi_{01}|_{U_{0\l}})$ is a \v CdR cocycle
in $A^p(\U_\l,U_0)$, the \fcn\ $(\pi_\l)_*F^*\psi_1+(\pi_{0\l})_*F^*\psi_{01}$ is $d$-closed so that it is a 
constant. By definition the constant is exactly $\op{Res}(F_\l^*\vP_T,p_\l)$ above. Finally from
\[
\int_{S_\l}\t=\int_{S_\l}\varphi,
\]
we have the theorem.
\end{proof}

\begin{remark} 1. In the above situation,  $H_{m-p}(S_\l,\Z)\simeq\Z$ and is generated by the fundamental class $S_\l$. Thus if 
$\op{Res}(F_\l^*\vP_T,p_\l)$ is an integer, $\op{Res}(F^*\vP_T,S_\l)$ is an integral class.
\smallskip

\noindent
2. The above theorem  can also be proved topologically as \cite[Theorem 4.1.1, see also Theorem 7.3.2]{Su8}.
In fact, using  techniques and results in \cite{Su8}, we may compute residues in various settings. 
\end{remark}

\section{Coincidence point theorem}\label{seccoin}

\subsection{Coincidence homology classes and   indices}\label{local-coin}

Let $M$ and $N$ be connected and oriented  $C^\infty$ \mfd s of dimensions $m$ and $n$, \r, with $m\ge n$.
We set $W=M\times N$ and 
orient $W$ so that the orientation of $M$ followed by that of $N$ gives the orientation of $W$.
Let $f,\, g:M\ra N$ be $C^\infty$ maps
and denote by $\vG_f$ and $\vG_g$ the graphs of $f$ and  $g$  in $W$. We consider the map
\[
\tilde f:M\lra\vG_f\subset W\qquad\text{defined by}\ \ \tilde f(x)=(x,f(x)),
\]
which is a diffeomorphism onto $\vG_f$. We orient $\vG_f$ so that $\tilde f$  is orientation preserving.
Similarly we define $\tilde g$ for $g$. Recall the diagram (cf. Definition \ref{prodmap})\,:
\[
\SelectTips{cm}{}
\xymatrix{ H^n(W)  \ar[d]_{\tilde f^*}\ar[r]^-\sim_-{P}&\breve H_m(W) 
\ar[d]^{M\cdot_{\tilde f}\ } \\
 H^n(M)\ar[r]^-\sim_-{P}&  \breve H_{m-n} (M).}
\]

\begin{definition}\label{gcoin} The {\em global coincidence class $\vL(f,g)$  of the pair $(f,g)$} is defined by
\[
\vL(f,g)=M\cdot_{\tilde f}[ \vG_g]\qquad\text{in}\ \ \breve H_{m-n}(M).
\]
\end{definition}

Note that $\tilde f$ induces an \iso\ $\tilde f_*:\breve H_{m-n}(M)\stackrel\sim\ra \breve H_{m-n}(\vG_f)$
and   $\vL(f,g)$ corresponds to $\vG_f\cdot [ \vG_g]$ in $\breve H_{m-n}(\vG_f)$, which is sent to
$[\vG_f]\cdot [ \vG_g]$ in $\breve H_{m-n}(W)$ by the canonical \homo\ 
$\breve H_{m-n}(\vG_f)\ra\breve H_{m-n}(W)$.

We define the {\em coincidence point set} of the pair $(f,g)$ by
\[
\op{Coin}(f,g)=\{ \,p \in M\mid f(p)=g(p) \,\}.
\]
Note that $\op{Coin}(f,g)=\tilde f^{-1}(\vG_g)$. For shortness, we set $S=\op{Coin}(f,g)$. 

From now on we assume that $M$ is compact
so that $\vG_g$ and $S$ are compact. Recall the diagram (cf. Definition \ref{prodmap})\,:
\[
\SelectTips{cm}{}
\xymatrix{ H^n(W,W\ssm \vG_g) \ar[d]_{\tilde f^*}\ar[r]^-\sim_-{A}&  H_m(\vG_g) 
\ar[d]^{(M\cdot_{\tilde f}\ )_S} \\
H^n(M,M\ssm S)\ar[r]^-\sim_-{A}&  H_{m-n} (S).}
\]

\begin{definition}\label{lcoin}
The {\em local coincidence class $\vL(f,g;S)$ of the pair $(f,g)$ at $S$} is defined to be the localized intersection class\,:
\[
\vL (f,g;S)=(M\cdot_{\tilde f} \vG_g)_S\qquad\text{in}\ \  H_{m-n}(S).
\]
\end{definition}

Note that $\tilde f$ induces a \homo\ $\tilde f_*:H_{m-n}(S)\ra H_{m-n}(\vG_f)$
and   $\vL(f,g;S)$  is sent to
$\vG_f\cdot [ \vG_g]$.

\begin{remark} 1. The classes $\vL(f,g)$ and $\vL(f,g;S)$ are in fact in homology with $\Z$ coefficients.
\smallskip

\noindent
2. We have
\[
\vL(g,f)=(-1)^n\vL(f,g),\qquad \vL(g,f;S)=(-1)^n\vL(f,g;S).
\]
\end{remark}

Suppose $S=\op{Coin}(f,g)$ has a finite number of connected components $(S_\l)_\l$.
Then we have $H_{m-n}(S)=\bigoplus_\l H_{m-n}(S_\l)$ and accordingly we have the
local coincidence class $\vL(f,g; S_\l)$ in $H_{m-n}(S_\l)$. From Theorem \ref{res-m}, we have
a general  coincidence point theorem\,:

\begin{theorem}\label{gcth} In the above situation
\[
\vL(f,g)=\sum_\l (i_\l)_*\vL(f,g; S_\l)\qquad\text{in}\ \  H_{m-n}(M).
\]
\end{theorem}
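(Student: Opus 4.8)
The plan is to deduce Theorem \ref{gcth} directly from the general residue theorem (Theorem \ref{res-m}) by specializing to the case $W=M\times N$, $F=\tilde f:M\ra W$, and $T=T_{\vG_g}$, the current of integration along the graph $\vG_g$. First I would observe that $\vG_g$ is a compact oriented $C^\infty$ submanifold of $W$ of codimension $n$ (via the diffeomorphism $\tilde g$), and that the real-analytic/subanalytic or simply $C^\infty$ hypothesis (*) is satisfied: $\vG_g$ is a submanifold, hence the polyhedron of a subcomplex of a compatible triangulation, and so is $S=\op{Coin}(f,g)=\tilde f^{-1}(\vG_g)$ by assumption (we have already assumed $M$ compact with $S$ having finitely many components; compactness of $S$ follows from that of $M$). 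Then $T_{\vG_g}$ is a closed $n$-current on $W$, localized at $\tilde S:=\vG_g$, and by Example \ref{Thom-submnf} we take $\vP_{\vG_g}$ as its Thom class along $\vG_g$. With these identifications, Definition \ref{prodmap} gives $M\cdot_{\tilde f}[\vG_g]=M\cdot_{\tilde f}T_{\vG_g}$, which by Definition \ref{gcoin} is exactly $\vL(f,g)$, and likewise $(M\cdot_{\tilde f}\vG_g)_{S_\l}=\op{Res}(\tilde f^*\vP_{\vG_g},S_\l)$ is $\vL(f,g;S_\l)$ by Definition \ref{lcoin} together with the direct-sum decomposition $H_{m-n}(S)=\bigoplus_\l H_{m-n}(S_\l)$.

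The key step is then just to quote Theorem \ref{res-m}(2): since $S=\tilde f^{-1}(\tilde S)$ is compact, satisfies (*), and has finitely many connected components $(S_\l)_\l$, the residue formula reads
\[
M\cdot_{\tilde f}T_{\vG_g}=\sum_\l (i_\l)_*\op{Res}(\tilde f^*\vP_{\vG_g},S_\l)\qquad\text{in}\ \ \breve H_{m-n}(M),
\]
where $i_\l:S_\l\hra M$ is the inclusion. Rewriting each side via the identifications of the previous paragraph yields $\vL(f,g)=\sum_\l(i_\l)_*\vL(f,g;S_\l)$. Since $M$ is compact, $\breve H_{m-n}(M)=H_{m-n}(M)$, which puts the identity in the stated form. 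One should also note, for consistency, that the compatibility of the global and localized products recorded just after Definition \ref{prodmap} (via diagram \eqref{PoinAlex}) is precisely what makes the two sides of the residue formula land in the same group.

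There is essentially no serious obstacle here: the statement is a specialization, and the real content lies in Theorem \ref{res-m}, which we are entitled to assume. The only point requiring a little care is the bookkeeping that translates the abstract residue $\op{Res}(\tilde f^*\vP_{\vG_g},S_\l)$ into the localized coincidence class $\vL(f,g;S_\l)$ — that is, checking that the choice $\vP_{T_{\vG_g}}=\vP_{\vG_g}$ from Example \ref{Thom-submnf} is the one used implicitly in Definition \ref{lcoin} (it is, since $A_{W,\vG_g}(\vP_{\vG_g})=[\vG_g]$ and the localized product $(M\cdot_{\tilde f}\ )_S$ of Definition \ref{prodmap} is defined on $A_{W,\tilde S}$-images), and that the decomposition of the residue over the components $S_\l$ matches the decomposition of $\vL(f,g;S)$. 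Both are immediate from the definitions, so the proof is short.
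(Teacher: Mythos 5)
Your proof is correct and is essentially the paper's own argument: the authors likewise obtain Theorem \ref{gcth} by specializing Theorem \ref{res-m} to $W=M\times N$, $F=\tilde f$, $T=T_{\vG_g}$ with Thom class $\vP_{\vG_g}$ (the identification of the residues with the localized intersection classes being exactly the content of Example \ref{rescycle}). Nothing further is needed.
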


In general, $\vL(f,g)$ and $\vL(f,g; S_\l)$ are given as in Propositions \ref{prodindR} and \ref{propint}.
The theorem becomes more meaningful if we have explicit descriptions of them.

In the case  $m=n$, $\vL(f,g; S_\l)$ is in $H_0(S_\l)=\C$ so that it is a number
(in fact an integer), which we call the {\em coincidence index} of $(f,g)$ at $S_\l$. If $S_\l$ consists of a point $p$, we have the following explicit formula.
In fact it is already known, however we give an alternative short  proof using the  Thom class in the \v Cech-de~Rham cohomology.
Let $U$ be a coordinate \nbd\ around $p$ with  
coordinates $x=(x_1,\dots,x_m)$ in $M$ and $V$ a coordinate \nbd\ around $f(p)=g(p)$ with  
coordinates $y=(y_1,\dots,y_m)$ in $N$. Also let $D$ be a closed ball around $p$ in $U$ \st\ $f(D)\subset V$ and $g(D)\subset V$.
Thus we may consider the map $g-f:D\ra\R^m$ whose image is the origin $0$ in $\R^m$ only at $p$. The boundary $\partial D$
is homeomorphic to the unit sphere $S^{m-1}$ and we have the map
\[
\gamma:\partial D\lra S^{m-1}\qquad\text{defined by}\ \ \gamma(x)=\frac{g(x)-f(x)}{\Vert g(x)-f(x)\Vert}.
\]
We denote the degree of this map by $\op{deg}(g-f,p)$.

\begin{proposition}\label{propisolated} In the above situation
\[
\vL(f,g; p)=\op{deg}(g-f,p).
\]
\end{proposition}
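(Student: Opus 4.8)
The plan is to compute the local coincidence index $\vL(f,g;p) = (M\cdot_{\tilde f}\vG_g)_S$ directly using the \v Cech-de~Rham representation of the Thom class of $\vG_g$ given in Proposition \ref{Thomloc}, together with the commutative diagram defining the localized intersection product in Definition \ref{prodmap}. Since $m=n$ and $S=\{p\}$, the class $\vL(f,g;p)$ lies in $H_0(p)\simeq\C$, and by Proposition \ref{propint}(3) (applied with $F=\tilde f$, $T=T_{\vG_g}$, $\vP_T=\vP_{\vG_g}$, $p=n$) it is the number $\int_{R}\tilde f^*\psi_1+\int_{R_{0}}\tilde f^*\psi_{01}$, where $(0,\psi_1,\psi_{01})$ is a \v CdR cocycle representing $\vP_{\vG_g}$ on a covering $\{W_0,W_1\}=\{W\ssm\vG_g, W_1\}$ of $W$, and $R$ is a closed ball around $p$ in the coordinate neighborhood $U$.

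First I would set up coordinates. On $U\times V\subset W=M\times N$ use coordinates $(x,y)$, so that $\vG_g$ is the submanifold $\{y=g(x)\}$, which has codimension $n$ in $W$. The normal bundle $N_{\vG_g}$ is trivialized over $\tilde g(U)$ by the fiber coordinate $y-g(x)\in\R^n$; composing with the tubular neighborhood identification, Proposition \ref{Thomloc} tells us that $\vP_{\vG_g}$ is represented near $\tilde g(U)$ by a \v CdR cocycle of the form $(0,0,-\rho^*\psi_n)$, where $\rho(x,y)=y-g(x)$ is the projection to the fiber and $\psi_n$ is an angular form on $\R^n\ssm\{0\}$ with $\int_{S^{n-1}}\psi_n=1$. (One must check that the orientation conventions for $N_{\vG_g}$ in Subsection \ref{Thom}(b) match the orientation of $\vG_g$ induced by $\tilde g$ and the chosen orientation of $W$; this is where signs could go wrong, but the conventions have been fixed precisely so that $\vP_{\vG_g}=A^{-1}(\vG_g)$.) With this choice $\psi_1=0$, so the first integral drops out and $\vL(f,g;p)=\int_{R_{0}}\tilde f^*\psi_{01} = -\int_{\partial D}\tilde f^*\rho^*\psi_n$, up to the orientation of $R_0=\partial D$ versus $\partial D$.

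Next I would identify the pullback. Since $\tilde f(x)=(x,f(x))$, we have $\rho\circ\tilde f(x) = f(x)-g(x) = -(g(x)-f(x))$, so $\tilde f^*\rho^*\psi_n = (g-f)^*\psi_n$ as a form on $\partial D$, where $g-f:D\ra\R^n$ vanishes only at $p$ and $\gamma=(g-f)/\|g-f\|:\partial D\ra S^{n-1}$ is its associated map. The standard fact (e.g. from \cite{BT}) is that for an angular form $\psi_n$ on $\R^n\ssm\{0\}$ normalized by $\int_{S^{n-1}}\psi_n=1$, one has $\int_{\partial D}(g-f)^*\psi_n = \int_{\partial D}\gamma^*(\psi_n|_{S^{n-1}}) = \op{deg}(\gamma) = \op{deg}(g-f,p)$, because $\psi_n$ is cohomologous on $\R^n\ssm\{0\}$ to the pullback of a generator of $H^{n-1}(S^{n-1})$ and the integral of a pullback along a map to a sphere computes the degree. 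Keeping careful track of the two sign conventions — the orientation of $R_0$ as $\partial R_0$ versus $-\partial R_1$, and the minus sign in the cocycle $(0,0,-\rho^*\psi_n)$ — these cancel, giving $\vL(f,g;p)=\op{deg}(g-f,p)$.

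The main obstacle is bookkeeping of orientations: one has three sign conventions to reconcile — the orientation of the honeycomb cell boundary $R_{01}$, the orientation of $\vG_g$ and hence of its normal bundle $N_{\vG_g}$ relative to $W=M\times N$, and the sign $-\rho^*\psi_n$ in Proposition \ref{Thomloc}. A clean way to avoid a slip is to verify the formula first in the trivial model case $N=\R^n$, $M=\R^n$, $f=0$, $g=\op{id}$ (or any linear isomorphism), where $\op{deg}(g-f,p)=\pm 1$ is computed by the sign of a determinant, and confirm that the \v CdR computation returns the same sign; the general case then follows because both sides are homotopy-invariant and locally constant in the appropriate sense. Everything else — the projection formula, Stokes, the identification $\rho\circ\tilde f = f-g$ — is routine.
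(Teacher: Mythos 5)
Your proposal follows essentially the same route as the paper's proof: apply Proposition \ref{propint}(3) with $F=\tilde f$ and a \v CdR representative of $\vP_{\vG_g}$ obtained from Proposition \ref{Thomloc} via the tubular neighbourhood identification $\vP_{\vG_g}\leftrightarrow\vP_{N_{\vG_g}}$, and recognize the resulting boundary integral over $\partial D$ as $\op{deg}(g-f,p)$. The structure and the conclusion are correct.

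The one step that is actually wrong as written, rather than merely deferred, is ``$\rho\circ\tilde f=f-g$, so $\tilde f^*\rho^*\psi_n=(g-f)^*\psi_n$.'' Replacing a map $h:\partial D\to\R^n\ssm\{0\}$ by $-h$ composes it with the antipodal map, which has degree $(-1)^n$ on $S^{n-1}$, so $\int_{\partial D}(f-g)^*\psi_n=(-1)^n\int_{\partial D}(g-f)^*\psi_n$; these differ when $n$ is odd. What rescues your computation --- and what your proposed model-case check would reveal --- is that the trivialization $\rho(x,y)=y-g(x)$ is itself negatively oriented exactly when $n$ is odd: with the convention of Subsection \ref{Thom}(b) and $W=M\times N$ oriented as in Section \ref{seccoin}, the Jacobian of $(x,y)\mapsto(g(x)-y,\,x)$ has sign $+1$ while that of $(x,y)\mapsto(y-g(x),\,x)$ has sign $(-1)^n$, so the positive fiber coordinate of $N_{\vG_g}$ over $\tilde g(U)$ is $g(x)-y$, not $y-g(x)$. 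With your choice the cocycle $(0,0,-\rho^*\psi_n)$ represents $(-1)^n\vP_{\vG_g}$, and the two factors of $(-1)^n$ cancel. The paper avoids this entirely by taking $\rho_g(x,y)=g(x)-y$ from the outset, so that $\rho_g\circ\tilde f=g-f$ and the degree appears with no residual sign; I would make that choice explicit rather than relying on the fortuitous cancellation.
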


\begin{proof} Let $\vP_{\vG_g}$ be the Thom class of $\vG_g$ and $(0,\psi_1,\psi_{01})$ its \v CdR representative.
We may take $D$ as $R_\l$ in Proposition \ref{propint}. Since $R_{0\l}=-\partial D$, we have
\begin{equationth}\label{coiniso}
\vL(f,g; p)=\int_D\tilde f^*\psi_1-\int_{\partial D}\tilde f^*\psi_{01}.
\end{equationth}
Recall that $\vP_{\vG_g}$ may be naturally identified with the Thom class of  the normal bundle $N_{\vG_g}$,
which is trivial over $\tilde g(U)$\,;
$N_{\vG_g}|_{\tilde g(U)}\simeq \R^m\times \tilde g(U)$ (cf. Subsection \ref{Thom} (b)). Let 
$\rho_g:N_{\vG_g}|_{\tilde g(U)}\ra \R^m$ denote the projection onto the fiber direction.  Also let $\psi_m$ be 
an angular form on $\R^m\ssm\{0\}$. Then on $\tilde g(U)$ the Thom class of $N_{\vG_g}$ is represented by the cocycle (cf. Proposition \ref{Thomloc})
\[
(0,0,-\rho_g^*\psi_m).
\]

Let $\pi_1$ and $\pi_2$ denote the projections of $W=M\times N$ onto the first and second factors, respectively. We set 
$x=\pi_1^*x$ and $y=\pi_2^*y$ on $U\times V$. By our orientation convention, in a \nbd\ of $\tilde g(p)$ in $N_{\vG_g}$, we may take $g(x)-y$ as fiber coordinates and $x$ as base coordinates of the bundle $N_{\vG_g}$ so that we may write 
$\rho_g(x,y)=g(x)-y$. Then \eqref{coiniso} becomes
\[
\vL(f,g; p)=\int_{\partial D}(\rho_g\circ\tilde f)^*\psi_m=\int_{\partial D}(g-f)^*\psi_m,
\]
which is nothing but $\op{deg}(g-f,p)$. 
\end{proof}

In the above situation, let $J_f(p)$ and 
$J_g(p)$ denote the Jacobian matrices of $f$ and $g$ at $p$. A coincidence point $p$ of the pair $(f,g)$ is said to be {\em non-degenerate} if
\[
\det(J_g(p)-J_f(p))\ne 0.
\]

\begin{corollary} If $p$ is a non-degenerate coincidence point,
\[
\vL(f,g; p)=\op{sgn}\, \det(J_g(p)-J_f(p)).
\]
\end{corollary}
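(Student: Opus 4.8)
The plan is to reduce the corollary to Proposition \ref{propisolated} and then compute the degree of the map $\gamma$ for a linear model. First I would invoke Proposition \ref{propisolated}, which gives
\[
\vL(f,g;p)=\op{deg}(g-f,p),
\]
so it suffices to show that $\op{deg}(g-f,p)=\op{sgn}\,\det(J_g(p)-J_f(p))$ whenever the Jacobian difference is invertible. Set $h=g-f:D\ra\R^m$; then $h(p)=0$ and $J_h(p)=J_g(p)-J_f(p)$ is nonsingular by hypothesis, so $p$ is an isolated zero of $h$ and $\op{deg}(g-f,p)$ is precisely the local degree of $h$ at its zero $p$.

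Next I would use the standard fact that the local degree of a $C^\infty$ map at a regular point of a fiber equals the sign of the Jacobian determinant there. Concretely, translate coordinates so that $p=0$ and $h(0)=0$, and write $h(x)=Ax+o(|x|)$ with $A=J_h(0)$ invertible. On a sufficiently small sphere $\partial D$ the straight-line homotopy $h_t(x)=tAx+(1-t)h(x)=Ax+(1-t)\,o(|x|)$ never vanishes on $\partial D$, since $|Ax|\ge c|x|$ for some $c>0$ while the error term is $o(|x|)$; hence $\gamma$ is homotopic through maps $\partial D\ra S^{m-1}$ to $x\mapsto Ax/\Vert Ax\Vert$. Therefore $\op{deg}(g-f,p)=\op{deg}$ of the linear map $A$ restricted to the sphere, which is $+1$ if $A$ preserves orientation and $-1$ if it reverses it, i.e. $\op{sgn}\,\det A$.

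Finally I would assemble these: combining Proposition \ref{propisolated} with the homotopy argument gives
\[
\vL(f,g;p)=\op{deg}(g-f,p)=\op{sgn}\,\det(J_g(p)-J_f(p)),
\]
as claimed. The only step requiring any care is the homotopy estimate showing $h_t$ is nonvanishing on a small sphere — this is where one must shrink $D$ so that the linear part dominates the remainder — but this is a routine consequence of $A$ being invertible and $h$ being differentiable at $p$. Everything else is either a direct citation of Proposition \ref{propisolated} or the elementary computation that a linear isomorphism of $\R^m$ has degree $\op{sgn}\,\det$ on spheres.
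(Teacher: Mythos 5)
Your proof is correct and is essentially the argument the paper leaves implicit: the corollary is stated without proof as an immediate consequence of Proposition \ref{propisolated}, the point being exactly that non-degeneracy makes $p$ an isolated zero of $g-f$ whose local degree is $\op{sgn}\det(J_g(p)-J_f(p))$ by the standard linearization/homotopy argument you give. Your write-up simply makes explicit the routine step the authors omit.
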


Now consider the case $m>n$. Suppose $S_\l$ is an oriented sub\mfd\ of $M$ of dimension $m-n$.  Let $p_\l$ be a point in $S_\l$ and $B_\l$ a small open ball of dimension $n$ in $M$ transverse
to $S_\l$ at $p_\l$. Setting $f_\l=f|_{B_\l}$ and $g_\l=g|_{B_\l}$, we have $\deg(g_\l-f_\l,p_\l)$.
From Theorem \ref{resnoniso}, we have\,:

\begin{proposition}\label{coinnonisol} Let $S_\l$ be a connected component of $\op{Coin}(f,g)$.
If $S_\l$ is an oriented sub\mfd\ of $M$ of dimension $m-n$,
\[
\vL(f,g;S_\l)=\op{deg}(g_\l-f_\l,p_\l)\cdot S_\l\qquad\text{in} \ \  H_{m-n}(S_\l).
\]
\end{proposition}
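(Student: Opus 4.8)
The plan is to recognize this as a direct corollary of Theorem \ref{resnoniso} applied to the specific current and map used to define the coincidence classes, combined with the isolated-point formula of Proposition \ref{propisolated}. First I would set the dictionary: take $W=M\times N$ (of dimension $m'=m+n$), take $F=\tilde f:M\to W$, $x\mapsto(x,f(x))$, and let $T=T_{\vG_g}$ be the integration current of the graph $\vG_g$, which is a closed $n$-current localized at $\tilde S=\vG_g$ with natural Thom class $\vP_{\vG_g}$ (Example \ref{Thom-submnf}). With these choices $S=F^{-1}(\tilde S)=\op{Coin}(f,g)$, the residue $\op{Res}(F^*\vP_{\vG_g},S_\l)$ is by definition exactly the local coincidence class $\vL(f,g;S_\l)=(M\cdot_{\tilde f}\vG_g)_{S_\l}$, and $p=n$ so $m-p=m-n$ is the dimension of $S_\l$ — precisely the hypothesis of Theorem \ref{resnoniso}.

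Next I would invoke Theorem \ref{resnoniso}: since $S_\l$ is assumed to be an oriented submanifold of $M$ of dimension $m-n$ with compatible orientation, that theorem gives
\[
\vL(f,g;S_\l)=\op{Res}(F^*\vP_{\vG_g},S_\l)=\op{Res}(F_\l^*\vP_{\vG_g},p_\l)\cdot S_\l\qquad\text{in }H_{m-n}(S_\l),
\]
where $F_\l=\tilde f|_{B_\l}$ and $B_\l$ is the transverse $n$-ball at $p_\l$. It remains only to identify the number $\op{Res}(F_\l^*\vP_{\vG_g},p_\l)$. But $F_\l=\tilde f|_{B_\l}$ maps the $n$-manifold $B_\l$ into $W$, and the residue $(B_\l\cdot_{F_\l}A(\vP_{\vG_g}))_{p_\l}\in H_0(p_\l)\simeq\C$ is formally the same object that Proposition \ref{propisolated} computes — one applies that proposition with $M$ replaced by the $n$-dimensional manifold $B_\l$, $f$ replaced by $f_\l=f|_{B_\l}$, $g$ replaced by $g_\l=g|_{B_\l}$, and the isolated coincidence point $p_\l$ (indeed $B_\l$ is transverse to $S_\l$, so $p_\l$ is the only point of $B_\l$ where $f_\l=g_\l$). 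Thus $\op{Res}(F_\l^*\vP_{\vG_g},p_\l)=\vL(f_\l,g_\l;p_\l)=\op{deg}(g_\l-f_\l,p_\l)$, and substituting this in yields the claim.

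The only genuine point requiring care — the step I expect to be the main obstacle — is checking that the orientation conventions line up so that Proposition \ref{propisolated} may be applied verbatim on $B_\l$: namely that $B_\l$ carries the orientation making ``orientation of $B_\l$ followed by that of $S_\l$'' agree with that of $M$, which is exactly the convention fixed before Theorem \ref{resnoniso}, and that under this choice the restricted graph $\vG_{g_\l}\subset B_\l\times N$ and its normal bundle inherit the orientations used in the proof of Proposition \ref{propisolated}. Once this bookkeeping is confirmed, no further computation is needed; everything else is a formal substitution into the two quoted results.
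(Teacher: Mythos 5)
Your proposal is correct and follows exactly the paper's own (very brief) argument: the proposition is stated as an immediate consequence of Theorem \ref{resnoniso} applied with $F=\tilde f$ and $T=T_{\vG_g}$, with the residue at $p_\l$ identified as $\op{deg}(g_\l-f_\l,p_\l)$ via Proposition \ref{propisolated} applied on the transverse ball $B_\l$. Your extra remark on checking the orientation conventions is a reasonable point of care, but it introduces nothing beyond what the paper implicitly assumes.
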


\subsection{Lefschetz coincidence point formula}

Let $M$ and $N$ be compact, connected and oriented $C^\infty$ manifolds of the same dimension $m$ and
let $f, g \colon M \ra N$ be $C^\infty$ maps. In this situation, $\breve H_0(M)=H_0(M)=\C$
and $\vL(f,g)$ is a number (in fact an integer), which has an explicit description.
Let
\[
H^p(f):H^p(N)\lra H^p(M)
\]
be the \homo\ induced by $f$ on the $p$-th  cohomology group and similarly for $H^p(g)$. We set $q=m-p$. The 
Poincar\'e duality  allows us to define the composition
\[
\begin{CD}
H^q(M)\simeq H^p(M)^* @>{H^p(g)^*}>> H^p(N)^* \simeq H^q(N) @>{H^q(f)}>> H^q(M).
\end{CD}
\]
We define the {\em Lefschetz coincidence number $L(f,g)$ of the pair $(f,g)$} as 
\[
L(f,g):=\sum_{q=0}^m (-1)^q \cdot \op{tr} (H^q(f) \circ H^{m-q}(g)^*).
\]

Although the following is already known, we include a proof for the sake of completeness. It is  a modification of the presentation as given in \cite{GH}
for the fixed point case, i.e., the case $M=N$ and $g=1_M$, the identity map of $M$.

\begin{proposition}\label{globallef}
In the above situation we have\,:
\[
\vL(f,g)=L(f,g).
\]
\end{proposition}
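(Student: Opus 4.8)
The plan is to identify $\vL(f,g)$ with an integral of a diagonal-type class over $M$ and then to compute that integral via a Künneth decomposition, exactly as in the classical Lefschetz fixed point argument in \cite{GH}. First I would use Definition \ref{gcoin} together with Proposition \ref{prodindR}(3): since $m=n$ and $M$ is compact, $\vL(f,g)=M\cdot_{\tilde f}[\vG_g]$ is the number $\int_M\tilde f^*\o$, where $\o$ is a de~Rham representative of the current $T_{\vG_g}$, i.e.\ a closed $m$-form on $W=M\times N$ that is Poincaré dual to the cycle $\vG_g$. Equivalently, $\o$ represents the Thom class $\vP_{\vG_g}\in H^m(W,W\ssm\vG_g)$ pushed into $H^m(W)$, so $\int_W\o\wedge\varphi=\int_{\vG_g}\varphi$ for all closed $\varphi\in A^m(W)$.

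Next I would expand $[\o]\in H^m_{\rm dR}(W)$ using the Künneth isomorphism $H^*(W)\simeq H^*(M)\otimes H^*(N)$. Choosing bases $\{e^q_i\}$ of $H^q(M)$ and dual bases $\{\eta^q_j\}$ of $H^{m-q}(N)$ with respect to the Poincaré pairings on $M$ and $N$ (normalized by the fundamental classes), one writes $[\o]=\sum_{q}(-1)^{?}\sum_i \pi_1^*e^q_i\wedge\pi_2^*\hat e^q_i$ for suitable classes $\hat e^q_i\in H^{m-q}(N)$, where the sign bookkeeping is dictated by the orientation convention on $W$ (orientation of $M$ followed by that of $N$). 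The defining property $\int_W\o\wedge\varphi=\int_{\vG_g}\varphi=\int_M g^*\varphi$ for $\varphi=\pi_1^*a\wedge\pi_2^*b$ then forces, after integrating over $M$ via the Poincaré pairing, that the class $\hat e^q_i$ is precisely the image of $e^q_i$ under the duality-transpose of $H^{m-q}(g)$, i.e.\ $\hat e^q_i$ corresponds to $e^q_i$ under $H^q(M)\simeq H^{m-q}(M)^*\xrightarrow{H^{m-q}(g)^*}H^{m-q}(N)^*\simeq H^q(N)$ followed by the reindexing into the $\eta$-basis. This is the step I expect to be the main obstacle: getting the Künneth decomposition of the diagonal-type Thom class of $\vG_g$ correct, and in particular tracking all signs $(-1)^{q(m-q)}$ and the ones coming from the chosen orientation of $W$ and of $\vG_f$, $\vG_g$.

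Finally I would pull back by $\tilde f$ and integrate. Since $\tilde f^*\pi_1^*=\mathrm{id}$ on forms on $M$ and $\tilde f^*\pi_2^* = f^*$, we get $\tilde f^*\o=\sum_q(\text{sign})\sum_i e^q_i\wedge f^*\hat e^q_i$, and
\[
\vL(f,g)=\int_M\tilde f^*\o=\sum_{q=0}^m(\text{sign}_q)\sum_i\langle e^q_i\smallsmile f^*\hat e^q_i,[M]\rangle .
\]
By the choice of dual bases, $\sum_i\langle e^q_i\smallsmile f^*\hat e^q_i,[M]\rangle$ is exactly the trace of the composite endomorphism $H^q(M)\to H^q(M)$ given by $c\mapsto f^*\big((H^{m-q}(g)^*)(c)\big)$ under the Poincaré identifications, which is $\op{tr}(H^q(f)\circ H^{m-q}(g)^*)$. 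Matching $\text{sign}_q$ with $(-1)^q$ (this is where the orientation convention on $W$ pays off) yields
\[
\vL(f,g)=\sum_{q=0}^m(-1)^q\op{tr}(H^q(f)\circ H^{m-q}(g)^*)=L(f,g),
\]
as claimed. I would present the sign computation carefully in a small lemma or remark, since it is the only genuinely delicate point; everything else is the standard Künneth/trace manipulation.
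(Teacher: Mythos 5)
Your proposal follows essentially the same route as the paper's proof: both express $\vL(f,g)=\int_M\tilde f^*\eta_g$ with $\eta_g$ a de~Rham (Poincar\'e dual) representative of $[\vG_g]$, decompose $[\eta_g]$ in a K\"unneth basis built from Poincar\'e-dual bases of $H^*(M)$ and $H^*(N)$, determine the coefficients from the defining property $\int_W\eta_g\wedge\varphi=\int_{\vG_g}\varphi$, and read off the alternating trace after pulling back by $\tilde f$. The sign bookkeeping you flag as the delicate point is indeed where the $(-1)^q$ comes from, and the paper handles it in exactly the way you outline.
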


\begin{proof}
Let $\{\mu_{i}^{p} \}_i$ be a set of closed forms representing a basis of $H_{\rm dR}^p(M)$.
We set $q=m-p$ and let $\{\check{\mu}^{q}_{j} \}_j$ be a set of forms  representing a basis of $H_{\rm dR}^q(M)$ dual 
 to $\{[\mu_{i}^{p}]\}_i$\,:
\[
\int_M\mu_{i}^{p}\wedge \check{\mu}_{j}^{q}=\delta_{ij}.
\]
We also take a set of forms  $\{\nu_k^{p} \}_k$ representing a basis of $H_{\rm dR}^p(N)$
and  a set of forms $\{\check{\nu}^{q}_\ell \}_\ell$ representing a  basis of $H_{\rm dR}^q(N)$ dual  to  
$\{[\nu_k^{p}] \}_k$.
By the K\"unneth formula, a basis of $H_{\rm dR}^m(W)$, $W=M\times N$,  is represented by
\[
\left\{\,  \xi^{p,q}_{i,\ell}=\pi_1^* \mu_{i}^{p}\wedge \pi_2^* \check\nu_\ell^{q}\, \right\}_{p+q=m},
\]
where $\pi_1$ and $\pi_2$ are projections onto the first and second factors.

Note that in general, for a $p$-form $\o$ on $M$ and a $q$-form $\t$ on $N$, we have
\begin{equationth}\label{intprod}
\int_{\vG_f}\pi_1^*\o\wedge\pi_2^*\t=\int_M\o\wedge f^*\t
\end{equationth}
and similarly for the integration on $\vG_g$.

Let $G^p=(g^p_{ki})$ be the matrix representing $H^p(g)$ in the  bases  
$\{[\nu_k^{p}] \}_k$ and $\{[\mu_{i}^{p}] \}_i$\,:
\[
H^p(g)[\nu_k^p]=\sum_i g^p_{ik}[\mu_i^p].
\]
Thus the dual map $H^p(g)^*$  is represented by the transposed ${}^t\hskip-.2mm G^p$ in the  bases  
$\{[\check\mu_j^q] \}_j$ and $\{[\check\nu_\ell^{q}] \}_\ell$.
Also let $\check F^q=(\check f^q_{\ell j})$ be the matrix representing 
$H^q(f)$  in the  bases  $\{[\check\nu_\ell^{q}] \}_\ell$ and $\{[\check\mu_j^q \}_j$\,:
\[
H^q(f)[\check\nu_\ell^{q}]=\sum_j\check f^q_{j\ell}[\check\mu_j^q].
\]

Let $\eta_g$ be an $m$-form representing the Poincar\'e dual of $[\vG_g]$ in $W$.
Using \eqref{corrpoin} and \eqref{intprod} for $\vG_g$, we compute to get
\[
[\eta_g]=\sum_{q,i,\ell}(-1)^q g^{p}_{i\ell}\, [\xi^{p,q}_{i,\ell}].
\]
Thus we have
\[
\vL(f,g)=\int_M\tilde f^*\eta_g
=\sum_{q,i,\ell}(-1)^{q}g^{p}_{i\ell}\int_M \mu_{i}^{p}\wedge f^*\check\nu_\ell^{q}
=\sum_{q,i,\ell}(-1)^{q}g^{p}_{i\ell}\,  \check f^q_{i\ell}.
\]
Since $ \check f^q_{i\ell}$ is the $\ell i$ entry of $\check F^q$ and $g^{p}_{i\ell}$ the $i\ell$ entry of 
${}^t\hskip-.2mm G^{p}$, we have the proposition.
\end{proof}

From Theorem \ref{gcth} and  Propositions \ref{propisolated} and \ref{globallef}, we have\,:

\begin{theorem}\label{non-isolatecoin} Let $M$ and $N$ be compact oriented $C^{\infty}$ manifolds of same dimension
and let $f , g \colon M \ra N$ be $C^\infty$ maps.
Suppose $\op{Coin}(f,g)$ has a finite number of connected components $(S_\l)_\l$. Then 
\[
L(f,g)=\sum_\l \vL(f,g;S_\l).
\]
\end{theorem}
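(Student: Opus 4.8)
The plan is to assemble Theorem \ref{non-isolatecoin} purely from pieces already established, so the ``proof'' is a short synthesis rather than a fresh argument. First I would invoke Proposition \ref{globallef}, which identifies the Lefschetz coincidence number $L(f,g)$ with the global coincidence class $\vL(f,g)$; since $M$ and $N$ are compact, connected, oriented of the same dimension $m$, we have $\breve H_0(M)=H_0(M)=\C$, so $\vL(f,g)\in \breve H_{m-n}(M)=\breve H_0(M)=\C$ is genuinely a number and the identification makes sense. Thus the left-hand side is replaced by $\vL(f,g)$.

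Next I would apply Theorem \ref{gcth} (the general coincidence point theorem, itself a specialization of the residue theorem \ref{res-m}) with $m=n$: under the hypothesis that $\op{Coin}(f,g)$ has finitely many connected components $(S_\l)_\l$, we get the decomposition $\vL(f,g)=\sum_\l (i_\l)_*\vL(f,g;S_\l)$ in $H_0(M)$. Here each $S_\l$ is compact (as $M$ is compact) and, being a connected component of $\op{Coin}(f,g)=\tilde f^{-1}(\vG_g)$, satisfies hypothesis (*) in the relevant cases; in the $m=n$ situation $\vL(f,g;S_\l)\in H_0(S_\l)=\C$ is the coincidence index at $S_\l$. Since each $(i_\l)_*:H_0(S_\l)=\C\to H_0(M)=\C$ is the identity on $\C$ (both $S_\l$ and $M$ being nonempty and $M$ connected), the pushforwards disappear and the formula reads $\vL(f,g)=\sum_\l \vL(f,g;S_\l)$.

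Combining the two displays gives $L(f,g)=\vL(f,g)=\sum_\l \vL(f,g;S_\l)$, which is exactly the assertion. I would remark that Proposition \ref{propisolated} is cited in the statement's provenance only because, when a component $S_\l=\{p_\l\}$ is an isolated point, it provides the concrete interpretation $\vL(f,g;p_\l)=\op{deg}(g-f,p_\l)$ and hence recovers the classical Lefschetz formula as Corollary \ref{lefcpf}; but for the theorem as stated (with possibly non-isolated $S_\l$) only Theorem \ref{gcth} and Proposition \ref{globallef} are logically needed.

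The only real point requiring a word of care is the identification of the maps $(i_\l)_*$ on $H_0$ with the identity, together with checking that the hypotheses of Theorem \ref{gcth} are met — namely that the $S_\l$ are compact and satisfy (*). Compactness is immediate from compactness of $M$; property (*) holds because $\op{Coin}(f,g)$ is the preimage under the diffeomorphism $\tilde f$ of the submanifold $\vG_g$ intersected appropriately, and in the setting already developed one may take a triangulation of $M$ compatible with it (this is exactly the standing hypothesis under which the localized intersection products and Theorem \ref{res-m} were set up). Everything else is bookkeeping: no new analysis, no new topology. I do not anticipate any genuine obstacle; the proof is essentially the sentence ``apply Proposition \ref{globallef}, then Theorem \ref{gcth}, then note $(i_\l)_*=\op{id}$ on $H_0$.''
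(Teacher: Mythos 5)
Your proposal is correct and matches the paper's own (one-line) proof: the theorem is obtained by combining Proposition \ref{globallef} with Theorem \ref{gcth}, and your observation that Proposition \ref{propisolated} is only needed for the isolated-point corollary, not for the theorem itself, is accurate. The remarks about $(i_\l)_*$ being the identity on $H_0$ and about the hypotheses of Theorem \ref{gcth} are sensible bookkeeping that the paper leaves implicit.
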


In  the case  the set of coincidence points consists only of isolated points, we have\,:
\begin{corollary}[Lefschetz coincidence point formula] Let $M$ and $N$ be compact oriented $C^{\infty}$ 
manifolds of the same dimension
and let $f , g \colon M \ra N$ be $C^\infty$ maps.
Suppose $\op{Coin}(f,g)$ consists   of a finite number of isolated points. Then
\[
L(f,g)=\sum_{p \in \op{Coin}(f,g) }\deg(g-f,p).
\]
Moreover, if all coincidence points are isolated and non-degenerate then 
\[
L(f,g)=\sum_{p\in \op{Coin}(f,g) }\op{sgn}\, \det(J_g(p)-J_f(p)).
\]\label{lefcpf}
\end{corollary}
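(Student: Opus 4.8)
The plan is to obtain the corollary as a direct specialization of Theorem~\ref{non-isolatecoin}, with the local computation of Proposition~\ref{propisolated} plugged in at each coincidence point. The observation that makes this work is that when $\op{Coin}(f,g)$ is a finite set of points, each point $p$ is precisely one of the connected components $S_\l$ appearing in Theorem~\ref{non-isolatecoin}; since $H_0(\{p\},\C)\simeq\C$, the local coincidence class $\vL(f,g;\{p\})$ is then a single complex number (in fact an integer, by the Remark after Definition~\ref{lcoin}), so the homological identity of Theorem~\ref{non-isolatecoin} becomes a numerical identity.

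First I would apply Theorem~\ref{non-isolatecoin}: its hypotheses---$M$, $N$ compact oriented of the same dimension and $\op{Coin}(f,g)$ with finitely many connected components---are exactly those of the corollary, the components being the isolated coincidence points. This gives
\[
L(f,g)=\sum_{p\in\op{Coin}(f,g)}\vL(f,g;\{p\}).
\]
Next, for each isolated coincidence point $p$ I would invoke Proposition~\ref{propisolated}: choosing coordinate neighborhoods $U$ of $p$ and $V$ of $f(p)=g(p)$ and a small closed ball $D\subset U$ around $p$ with $f(D)\subset V$ and $g(D)\subset V$, so that $g-f$ is defined as a map $D\ra\R^m$ vanishing only at $p$, Proposition~\ref{propisolated} yields $\vL(f,g;\{p\})=\op{deg}(g-f,p)$. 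Substituting into the displayed sum gives the first assertion of the corollary. (Alternatively one could bypass the packaging in Theorem~\ref{non-isolatecoin} and argue straight from Theorem~\ref{gcth} and Proposition~\ref{globallef}, but citing Theorem~\ref{non-isolatecoin} is cleaner.)

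For the second assertion, I would simply apply the Corollary following Proposition~\ref{propisolated}: at a non-degenerate coincidence point, i.e.\ one with $\det(J_g(p)-J_f(p))\ne 0$, one has $\op{deg}(g-f,p)=\op{sgn}\,\det(J_g(p)-J_f(p))$, because near $p$ the map $g-f$ agrees to first order with the linear isomorphism $J_g(p)-J_f(p)$, whose local degree at the origin is the sign of its determinant; summing over all coincidence points then finishes the proof. I do not anticipate any genuine obstacle: the only point deserving an explicit word is the matching of the homological splitting $H_0(\op{Coin}(f,g))=\bigoplus_p H_0(\{p\})$ underlying Theorem~\ref{gcth} (hence Theorem~\ref{non-isolatecoin}) with the list of isolated points, together with the observation that each summand $\vL(f,g;\{p\})$ is a scalar, so that the statement can be read as an equality of integers.
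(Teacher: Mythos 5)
Your proposal is correct and follows exactly the route the paper intends: the corollary is an immediate specialization of Theorem~\ref{non-isolatecoin}, with each isolated point forming a component $S_\l$, the local class $\vL(f,g;p)$ evaluated by Proposition~\ref{propisolated} as $\deg(g-f,p)$, and the non-degenerate case handled by the corollary following that proposition. No further comment is needed.
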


\begin{remark} 1. The above theory applied to the case $N=M$ and $g=1_M$, the identity map of $M$, gives a general
fixed point theorem for $f$ and the Lefschetz fixed point formula, which is  effective also in the study of periodic
points.
\smallskip

\noindent
2. Let  $f,g:M\ra N$ be  $C^\infty$ maps.  If 
$g$ is a diffeomorphism, the coincidence theory for the pair $(f,g)$  is equivalent to
the fixed point theory for the map $g^{-1}\circ f$ of $M$.
\smallskip

\noindent
3. As a textbook dealing with the Lefschetz coincidence theory, we list  \cite{V}.
\smallskip

\noindent
4. In \cite{S}, for a pair of maps  from a topological space to a \mfd, the Lefschetz \homo\
is defined and it is proved that if it is non-trivial, then there is a coincidence point.
\end{remark}

\bigskip

C. Bisi

Dipartimento di Matematica ed Informatica

Universit\`a di Ferrara

Via Machiavelli, n. 35

44121 Ferrara, Italy.

bsicnz@unife.it

\bigskip

F. Bracci

Dipartimento di Matematica

Universit\`a di Roma ``Tor Vergata''

Via della Ricerca Scientifica, n. 1

00133 Roma, Italy.

fbracci@mat.uniroma2.it

\bigskip

T. Izawa

Department of Information and Computer Science

Hokkaido University of Science

Sapporo 006-8585, Japan

t-izawa@hus.ac.jp

\bigskip

T. Suwa

Department of Mathematics

Hokkaido University

Sapporo 060-0810, Japan

tsuwa@sci.hokudai.ac.jp

\end{document}